\documentclass[reqno]{amsart}
\usepackage{amssymb}
\usepackage{mathrsfs}
\usepackage{amsmath,amstext,amsfonts,verbatim}
\usepackage{color}
\usepackage{exscale}
\usepackage{relsize}
\usepackage{cite}

\setlength{\parskip}{3pt}
\setlength{\textwidth}{15.2cm}
\setlength{\oddsidemargin}{.5cm}
\setlength{\evensidemargin}{0.5cm}
\setlength{\textheight}{23cm}
\setlength{\topmargin}{-1cm}
\setlength{\footskip}{1.5cm}

\newtheorem{theorem}{Theorem}[section] \newtheorem{lemma}[theorem]{Lemma} \newtheorem{proposition}[theorem]{Proposition} \newtheorem{corollary}[theorem]{Corollary} \theoremstyle{definition}          \theoremstyle{remark}  \numberwithin{equation}{section}

\title[]{ \textbf{Toeplitz operators on the Fock space via the Fourier transform}} \author[]{Shengkun Wu$^{1}$ and Dechao Zheng$^{2}$}

\address{$^{1}$ College of Mathematics and Statistics, Chongqing University, Chongqing, 401331, PR China} \email{shengkunwu@foxmail.com} \address{$^{2}$ Department of Mathematics, Vanderbilt University, Nashville, TN 37240 and Center of Mathematics, Chongqing University, Chongqing, 401331, PR China} \email{dechao.zheng@vanderbilt.edu}

 \keywords{Toeplitz operators, Fock space, boundedness}
 \thanks{The first author is supported by CSC201906050022. This work is partially supported by NFSC}
 \thanks{}
 \thanks{}

\date{}

\begin{document}

\begin{abstract} Insprited by Berger-Coburn theorems and their conjecture in \cite{Coburn1994}, we use the Fourier transform to decompose $ T_{g}$ as an infinite sum of Toeplitz operators with symbols which have compact support in the frequency domain.  As a consequence, we obtain a sufficient condition for $ T_{g}$ to  be bounded in terms of the Carleson measure conditions defined by the heat transform of the symbol $g$. Moreover the decomposition of  a Toeplitz operator leads us to get easily understanding that for a bounded function $g$, if its Berezin transform vanishes at infinity, then the Toeplitz operator $T_g$ is compact \cite{Eng} and the Toeplitz algebra generated by Toeplitz operators with symbols in $L^{\infty}$ is indeed generated by Toeplitz operators with symbols which on  uniformly continuous are ${\mathbb C}^n$ \cite{Bauer2012}.  \end{abstract} \maketitle

\section{Introduction }
Let $dv(x)$ denote the Lebesgue measure on $\mathbb{C}^n$. For any positive parameter $\alpha$,
let
$$d\lambda_{\alpha}(x)=(\frac{\alpha}{\pi})^n e^{-\alpha|x|^2}dv(x)$$
 denote the Gaussian measure on $\mathbb{C}^n$.
 Let $L^2(\mathbb{C}^n,d\lambda_{\alpha})$ be the Hilbert space consisting of all square integrable functions with respect to the Gaussian measure.
The Fock space $F_{\alpha}^2$ is a subspace of $L^2(\mathbb{C}^n,d\lambda_{\alpha})$ which consists all holomorphic functions $f$ on $\mathbb{C}^n$.
Let $P$ be the projection from $L^2(\mathbb{C}^n,d\lambda_{\alpha})$ onto the Fock space.
The inner product of two functions $f,g\in F_{\alpha}^2$ is given by
$$\langle f,g\rangle=\int_{\mathbb{C}^n}f\overline{g}d\lambda_{\alpha} .$$

For $z,w\in \mathbb{C}^n$, on one hand  the inner product $\langle z,w\rangle$ is defined by
$$\langle z,w\rangle=z_1\overline{w_1}+z_2\overline{w_2}+\dots+z_n\overline{w_n}$$
where $z=(z_1, z_2,\dots,z_n)$ and $w=(w_1, w_2,\dots,w_n)$.
  On the other hand, $z$ and $w$ can be viewed as two vectors in $\mathbb{R}^{2n}.$ We
 denote the inner product of $z$ and $w$ in $\mathbb{R}^{2n}$ as
$$z \cdot w=\Re z \cdot \Re w+\Im z \cdot \Im w =\Re z_1\Re w_1+\cdots+\Re z_n\Re w_n+\Im z_1\Im w_1+\dots+\Im z_n\Im w_n$$
where
$$\Re z=(\Re z_1,\Re z_2,\dots,\Re z_n), \text{\quad and \quad}\Im z=(\Im z_1,\Im z_2,\dots,\Im z_n),$$
$ \Re z_i$ and $\Im z_i$ denote the real part and the imagery part of the complex number $z_i$ respectively.
Thus $$z \cdot w= \Re \langle z,w\rangle.$$
 For a multi-index $\sigma =(\sigma_1,\dots,\sigma_n)\in \mathbb{Z}_{+}^n$, the order $|\sigma |$ of the multi-index $\sigma$ is defined by
$$|\sigma| =\sigma_1+\dots+\sigma_n .$$
The monomial $z^{\sigma}$ is defined by
$$z^{\sigma}=z_1^{\sigma_1}z_2^{\sigma_2}\cdots z_n^{\sigma_n}.$$
 Similarly, we define the differential operators $\partial^{\sigma}_{\Re z}$ and $\partial^{\sigma}_{\Im z}$  by
 $$\partial^{\sigma}_{\Re z}=\partial^{\sigma_1}_{\Re z_1}\cdots\partial^{\sigma_n}_{\Re z_n}\text{ and }\partial^{\sigma}_{\Im z}=\partial^{\sigma_1}_{\Im z_1}\cdots\partial^{\sigma_n}_{\Im z_n}.$$

The Fock space is a reproducing kernel Hilbert space and for any $w,z\in \mathbb{C}^n$ with the reproducing kernel $K_w(z)$   given by
$$K_{w}(z)=e^{\alpha \langle z,w\rangle}.$$
The normalized reproducing kernel $k_{w}(z)$ is giving by
$$k_w(z)=e^{\alpha \langle z,w\rangle-\frac{\alpha}{2}|w|^2}.$$
Since the linear expansion of $\{k_w\}$ is dense in the Fock space,
for a measurable function $g$ on $\mathbb{C}^n$, if for any $w\in \mathbb{C}^n$ we have $gk_w\in L^2(\mathbb{C}^n,d\lambda_{\alpha})$, then we define the Toeplitz operator $T_g$ with symbol $g$ on the linear combinations of $\{k_w\}$ by
$$T_g\sum_{j=1}^nc_jk_{w_j}=P\Big(g\sum_{j=1}^nc_jk_{w_j}\Big).$$

A natural problem is  what   necessary and sufficient conditions are
for $ T_{g}$ to  be bounded. In \cite{Coburn1994},  Berger and Coburn use  some trace formula to obtain   some necessary conditions for $ T_{g}$ to  be bounded and  use the Bargmann transform and Calderon-Vaillancourt pseudo-differential estimates to obtain some sufficient conditions  for $ T_{g}$ to  be bounded in terms of the heat transform of the symbol $g$. These conditions lead them to make   conjecture on the problem \cite{Coburn1994}.

Insprited by Berger-Coburn theorems and their conjecture in \cite{Coburn1994}, we use the Fourier transform to decompose $ T_{g}$ as an infinite sum of Toeplitz operators with symbols which have compact support in the frequency domain.

To state our results precisely we need to introduce notations. For any positive parameter $t$, the heat transform of $g$ is defined by
$$\mathcal{H}_{t}g(z)=(\frac{1}{t\pi})^n \int_{\mathbb{C}^n} g(w) \mathrm{e}^{-\frac{|z-w|^{2}}{t}} dv(w), \text{ for any } z \in \mathbb{C}^n.$$
In fact, if $t=\frac{1}{\alpha}$, a simple calculation gives that the heat transform of $g$ is equal to the Berezin transform of the Toeplitz operator $T_{g}$:
$$\mathcal{H}_{\frac{1}{\alpha}}g(z)=\langle T_{g}k_z, k_z\rangle .$$
For two functions $f$ and $g$ in $L^2(\mathbb{C}^n, dv)$, the convolution $f\ast g$ of $f$ and $g$ is defined by
$$f\ast g(z)=\int_{\mathbb{C}^n}f(z-w)g(w)dv(w)$$
for any $z\in\mathbb{C}^n$.

For any $y\in \mathbb{C}^{n}$, $\tau_{y}$ is a translation operator:
$$\tau_{y}f(x)=f(x-y).$$
For a Lebesgue integrable function $f$ on ${\mathbb C}^n$, the Fourier transform $ \mathcal{F}f$ of $f$ is defined by
$$ \mathcal{F}f(z)=\int_{\mathbb{R}^{2n}}f(w)e^{-2\pi i z\cdot w}dv(w).$$
Let $\gamma_t(z)$ be $2n$-dimension heat kernel $(\frac{1}{t\pi})^n \mathrm{e}^{-\frac{|z|^{2}}{t}}$ and let
$a_{t}(z)$ be the  Gaussian function $e^{-\pi^2t|z|^2}$.  Then the heat transform of $g$ is given by
$$\mathcal{H}_{t}g(z)=g\ast\gamma_t(z)$$
and
$$\mathcal{F}^{-1}\gamma_{t}(z)=\mathcal{F}\gamma_{t}(z)=a_{t}(z)$$
as the Gaussian function $e^{-\pi |z|^2}$ is fixed by the Fourier transform.

In the case of $\alpha=\frac{1}{2}$, in  \cite{Coburn1994}, Berger and Coburn proved that if $\mathcal{H}_{t}g$ is bounded for some $t\in (0,\frac{1}{2\alpha})$, then $T_g$ is bounded. In fact, they showed that if
$$\partial^{a}_{\Re y}\partial^{b}_{\Im y}\mathcal{H}_{\frac{1}{2\alpha}}g(y)$$
is bounded for any $a,b\in \mathbb{Z}^n$ with $|a|+|b|\leq 2n+1$, then $T_g$ is bounded.
In this paper, we will improve the above conditions replacing the boundedness  by the Carleson measure condition on the Fock space.

Let $d\mu$ be a positive measure on $\mathbb{C}^n$. For any $r>0$ and $x\in\mathbb{C}^n$, let $B(x,r)$ denote the ball in $\mathbb{C}^n$ with center $x$ and radius $r$. If there is  a positive constant $C_r$ such that
$$\sup_{x\in\mathbb{C}^n}\mu(B(x,r))<C_{r},$$
then we say that $d\mu$ is a  Carleson  measure on the Fock space. It is shown in  \cite[Theorem 3.29]{Zhu}  that $d\mu$ is a  Carleson  measure on the Fock space if and only if
$$\sup_{w\in\mathbb{C}^n}\int_{\mathbb{C}^n} \mathrm{e}^{-\frac{p\alpha}{2}|z-w|^{2}} d\mu(z) < \infty,$$
for some $0<p<\infty$.
Thus, for a nonnegative function $f$ on $\mathbb{C}^n$, ${\mathcal H}_{\frac{2}{\alpha}}(f)\in L^{\infty}$ if and only if $f(y)dv(y)$ is a  Carleson  measure on the Fock space. Moreover,
for any $r>0$, there is a $C_r$ such that
\begin{equation}\label{equation 1.1}
\frac{1}{C_r}\|\mathcal{H}_{\frac{2}{\alpha}}(f)\|_{\infty}\leq \sup_{x}\int_{B(x,r)}f(y)dv(y)\leq C_r \|\mathcal{H}_{\frac{2}{\alpha}}(f)\|_{\infty}.
\end{equation}

We will obtain a sufficient condition for a Toeplitz operator to be bounded and show that how the heat transformation of the symbol of a Toeplitz operator is related to the operator itself. In fact, a Toeplitz operator $T_g$ can be represented by the weighted integral of "translates" of  $T_{\mathcal{H}_{\frac{1}{2\alpha}+t}g}$.
To do so, we introduce the partition of unity.

Let $Q_0=\{\xi +i\eta : \xi,\eta \in (-\frac{1}{2},\frac{1}{2}]^n\}$ be a rectangle in $\mathbb{C}^n$ and $x_0=(0,\dots,0 )$ be the center of $Q_0$.  Let $\Gamma$ be the lattice ${\mathbb Z}^{2n}$, i.e.,
$$ \Gamma=\{ z=(z_1, z_2, \cdots z_n)\in {\mathbb C}^n:\Re z_1, \Im z_1,\dots, \Re z_n,\Im z_n \in \mathbb{Z}\}.$$
For any $x\in \Gamma$, let $Q_x$ denote the rectangle with center $x$, which is a translate of $Q_0$, i.e.,
$$Q_x=Q_0 +x.$$
Then the collection $\{Q_x:x\in \Gamma\}$ of rectangles tiles  $\mathbb{C}^n$.


Let $\phi$ be a smooth function such that
 $$\phi (z)=\left \{\begin{array}{cc} 1  & ~~ z\in Q_0\\
0  &   ~~z\notin 2Q_0  .  \end{array} \right.$$
where $2Q_0= (-1,1]^{2n}$.

For any $x\in \Gamma$ and $z\in \mathbb{C}^n$ , let $\phi_{x}(z)=\phi(z-x)$. Since
\begin{equation}\label{equation 2.1}
\sum_{x\in \Gamma}\frac{1}{(1+|\Re x|+|\Im x|)^{2n+1}}\leq \sum_{x\in \Gamma}\frac{1}{(1+|x|)^{2n+1}}<\infty,
\end{equation}
  the series
$\sum_{x\in\Gamma } \phi_{x}$ of smooth functions  converges uniformly on compact subsets of ${\mathbb C}^n$  and satisfies
$$\frac{1}{c}\leq |\sum_{x\in\Gamma } \phi_{x}(z)|\leq c$$
for $z$ in  ${\mathbb C}^n$ and for some positive constant $c$. Letting
$$ \varphi_{x}=\frac{\phi_{x}}{\sum_{x\in\Gamma } \phi_{x}} ,$$
 we obtain
$$\varphi_{x}(z+x)=\varphi_{x_0}(z),$$ and $$\sum_{x\in\Gamma}\varphi_{x}=1.$$

The following theorem is our main result.

\begin{theorem}\label{Theorem 1.2}
Let g be a measurable function on $\mathbb{C}^n$ such that $gk_w\in L^2(\mathbb{C}^n,d\lambda_{\alpha})$ for any $w\in \mathbb{C}^n$. For each $x$ in $\Gamma ={\mathbb Z}^{2n}$, let $$g_x=(\mathcal{H}_{\frac{1}{2\alpha}}g)\ast\mathcal{F}(\varphi_{x} a_{\frac{1}{2\alpha}}^{-1}).$$ If for any $a,b\in \mathbb{Z}_{+}^n$ with $|a|+|b|\leq 2n+1$, $|\partial^{a}_{\Re y}\partial^{b}_{\Im y}\mathcal{H}_{\frac{1}{2\alpha}}g(y)|dv(y)$ is a  Carleson measure on the Fock space, then the Toeplitz operator $T_g$ is decomposed as a sum of Toeplitz operators with symbols which have compact support    in the frequency domain:
$$T_g=\sum_{x\in \Gamma}T_{g_x}$$
 and for any $t\geq 0,$
$$T_{g_x}=\int_{\mathbb{C}^n}\mathcal{F}(\varphi_{x} a_{\frac{1}{2\alpha}+t}^{-1})(y)T_{\tau_{y}\mathcal{H}_{\frac{1}{2\alpha}+t}g}dv(y)
,$$
where the summation and the integral are both convergent in the operator norm topology and hence $T_g$ is bounded. Moreover
$$\|T_g\|\leq C_{n,\alpha}\sum_{|a|+|b|\leq 2n+1}\|\mathcal{H}_{\frac{2}{\alpha}}(|\partial^{a}_{\Re y}\partial^{b}_{\Im y}\mathcal{H}_{\frac{1}{2\alpha}}g(y)|) \|_{\infty},$$
where $C_{n,\alpha}$ is a positive constant depending only on $n$ and $\alpha$.
\end{theorem}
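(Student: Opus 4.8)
The plan is to combine a purely Fourier-analytic bookkeeping with the classical description of Toeplitz operators of Fourier modes; the crux will be an operator-norm estimate for each piece $T_{g_x}$ that decays fast enough in $x$ to be summed against \eqref{equation 2.1}.

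\emph{Symbol identities and the integral formula.} Put $G:=\mathcal H_{\frac1{2\alpha}}g$. Under the hypothesis $G$ and every $\partial^a_{\Re y}\partial^b_{\Im y}G$ with $|a|+|b|\le 2n+1$ are integrable uniformly over unit cubes, so, since $2n+1>2n$, the Sobolev embedding $W^{2n+1,1}\hookrightarrow L^\infty$ on $\mathbb R^{2n}$ forces $G\in L^\infty$; hence $\widehat G$ is a well defined tempered distribution and $T_{\mathcal H_{\frac1{2\alpha}+t}g}=T_{\mathcal H_t G}$ is bounded for every $t\ge 0$ (note $\mathcal H_tG\in L^\infty$). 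From $\mathcal H_{\frac1{2\alpha}+t}g=G\ast\gamma_t$ we get $\mathcal F^{-1}(\mathcal H_{\frac1{2\alpha}+t}g)=a_t\,\mathcal F^{-1}G$, and then $\sum_{x\in\Gamma}\varphi_x\equiv 1$ and $a_sa_t=a_{s+t}$ give the frequency-side identity $\mathcal F^{-1}g_x=\varphi_x\,a_{\frac1{2\alpha}+t}^{-1}\,\mathcal F^{-1}(\mathcal H_{\frac1{2\alpha}+t}g)$ for all $t\ge 0$, i.e. $g_x=(\mathcal H_{\frac1{2\alpha}+t}g)\ast\mathcal F(\varphi_x a_{\frac1{2\alpha}+t}^{-1})$, the convolution kernel being Schwartz (Fourier transform of a smooth function supported in $2Q_x$). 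Combined with the elementary lemma $T_{f\ast\kappa}=\int_{\mathbb C^n}\kappa(y)\,T_{\tau_y f}\,dv(y)$ — valid for Schwartz $\kappa$ and bounded $T_f$, since $T_{\tau_y f}=W_{-y}^{*}T_fW_{-y}$ with $W_{-y}$ a unitary displacement operator, so the integrand is Bochner integrable in operator norm — this yields the asserted representation of $T_{g_x}$, once $T_{g_x}$ is known to be bounded.

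\emph{Cancellation of the Gaussian weight.} Recall the classical fact that for $\xi\in\mathbb R^{2n}$ the Toeplitz operator of the Fourier mode $e^{2\pi i\xi\cdot z}$ equals $a_{\frac1{2\alpha}}(\xi)\,W_\xi$, with $W_\xi$ a unitary depending unitarily on $\xi$; in particular $\|T_{e^{2\pi i\xi\cdot z}}\|=a_{\frac1{2\alpha}}(\xi)$. Since $\mathcal F^{-1}g_x=\varphi_x a_{\frac1{2\alpha}}^{-1}\mathcal F^{-1}G$ is supported in $2Q_x$, Fourier inversion writes $g_x$ as a superposition of such modes and, passing $T$ through the integral, $T_{g_x}=\int_{2Q_x}\big(\varphi_x\,\mathcal F^{-1}G\big)(\xi)\,W_{\pm\xi}\,dv(\xi)$, so the built-in factor $a_{\frac1{2\alpha}}^{-1}$ exactly annihilates the Gaussian $a_{\frac1{2\alpha}}(\xi)$. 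This is the essential point: the crude bound $\|T_{g_x}\|\le\|\mathcal H_{\frac1{2\alpha}+t}g\|_\infty\,\|\mathcal F(\varphi_x a_{\frac1{2\alpha}+t}^{-1})\|_{L^1}$ is useless, the kernel having $L^1$-norm of size $e^{\pi^2|x|^2/(2\alpha)}$.

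\emph{Decay in $x$, summation, and identification.} For $|x|$ large the cube $2Q_x$ lies at distance $\gtrsim|x|$ from the origin; picking a coordinate $j_0$ with $|x_{j_0}|\gtrsim|x|$ and using $\mathcal F^{-1}(\partial^{2n+1}_{j_0}G)=(\mp 2\pi i\xi_{j_0})^{2n+1}\mathcal F^{-1}G$, we replace $\varphi_x\mathcal F^{-1}G$ by $\big((2\pi i\xi_{j_0})^{-(2n+1)}\varphi_x\big)\mathcal F^{-1}H$ on $2Q_x$, where $H=\partial^{2n+1}_{j_0}G=\partial^a_{\Re y}\partial^b_{\Im y}G$ for suitable $a,b$ with $|a|+|b|=2n+1$ and $\|(2\pi i\xi_{j_0})^{-(2n+1)}\varphi_x\|_{C^{2n+1}}\lesssim(1+|x|)^{-(2n+1)}$. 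Estimating the norm of the resulting operator — using the Weyl relation $W_\xi=e^{i\theta(x,\xi-x)}W_xW_{\xi-x}$ to strip off the unimodular $W_x$ and a linear phase, and then pairing the distribution $\mathcal F^{-1}H$, whose local pieces have norm $\lesssim\sup_w\int_{B(w,r)}|H|\,dv$ because $H$ is uniformly locally integrable, against the remaining smooth $C^{2n+1}$-bounded amplitude — gives
\[
\|T_{g_x}\|\le\frac{C_{n,\alpha}}{(1+|x|)^{2n+1}}\sum_{|a|+|b|\le 2n+1}\big\|\mathcal H_{\frac2\alpha}\big(|\partial^a_{\Re y}\partial^b_{\Im y}\mathcal H_{\frac1{2\alpha}}g(y)|\big)\big\|_\infty ,
\]
where $\sup_w\int_{B(w,r)}|\partial^a_{\Re y}\partial^b_{\Im y}G|\,dv$ was turned into the right-hand side by \eqref{equation 1.1}. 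Summing over $x\in\Gamma$ and invoking \eqref{equation 2.1} shows that $S:=\sum_x T_{g_x}$ converges in operator norm with $\|S\|$ bounded by the asserted quantity, which also legitimizes the integral formula above. Finally $S=T_g$: since $\sum_{x\in F}\mathcal F^{-1}g_x=\big(\sum_{x\in F}\varphi_x\big)a_{\frac1{2\alpha}}^{-1}\mathcal F^{-1}G\to a_{\frac1{2\alpha}}^{-1}\mathcal F^{-1}G$ when tested against the Gaussian functions $k_w\overline{k_{w'}}e^{-\alpha|\cdot|^2}$ — whose Fourier transforms decay like $e^{-\pi^2|\xi|^2/\alpha}$, dominating $a_{\frac1{2\alpha}}^{-1}=e^{\pi^2|\xi|^2/(2\alpha)}$ — dominated convergence gives $\langle S k_w,k_{w'}\rangle=\int g\,k_w\overline{k_{w'}}\,d\lambda_\alpha=\langle T_g k_w,k_{w'}\rangle$, and a bounded operator is determined by its matrix entries against $\{k_w\}$.

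\emph{The main obstacle.} The hard part is the norm estimate just displayed. The operator $\int_{2Q_x}(\varphi_x\mathcal F^{-1}G)(\xi)W_{\pm\xi}\,dv(\xi)$ is \emph{not} absolutely convergent in operator norm (the $W_{\pm\xi}$ are unitary and the amplitude is merely a distribution), and $\xi\mapsto W_\xi$ has unbounded derivatives, so one cannot integrate by parts against the Weyl operators; instead all $2n+1$ derivatives must be moved onto $G$ by the monomial division in frequency, and the pairing of $\mathcal F^{-1}H$ with the remaining amplitude controlled purely by the uniform-local-$L^1$ size of $H$. This is a Calderón–Vaillancourt-type estimate tuned to the Carleson hypothesis, and the exponent $2n+1$ is forced both by the convergence of \eqref{equation 2.1} and by the Sobolev threshold $2n+1>2n$ on $\mathbb R^{2n}$.
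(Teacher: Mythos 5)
Your overall architecture mirrors the paper's (frequency partition of unity, Weyl recentering, gaining $(1+|\Re x|+|\Im x|)^{-(2n+1)}$ by moving $2n+1$ derivatives onto the heat transform, summation via (\ref{equation 2.1}), identification through reproducing kernels, and the $t\ge 0$ integral formula from $T_{\tau_y f}=W_yT_fW_y^*$), but the one step that carries all the weight is not proved. Your norm bound for $T_{g_x}$ rests on writing $T_{g_x}=\int_{2Q_x}(\varphi_x\mathcal F^{-1}G)(\xi)\,W_{\pm\xi}\,dv(\xi)$ and then ``pairing the distribution $\mathcal F^{-1}H$ against the remaining smooth amplitude.'' This is exactly the point where an argument is needed and none is given: the amplitude is a genuine tempered distribution (not a measure), the superposition is, as you yourself note, not absolutely convergent in operator norm, and $\xi\mapsto W_\xi$ is not even norm-continuous (its matrix coefficients $\langle W_\xi k_z,k_z\rangle$ carry a phase oscillating in $z$), so one cannot pair an operator-valued ``test function'' of the form $(\text{smooth})(\xi)W_\xi$ against $\mathcal F^{-1}H$ and read off an operator-norm bound from the uniform local $L^1$ size of $H$. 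Knowing that $|\langle\mathcal F^{-1}H,\chi\rangle|\lesssim\sup_w\int_{B(w,1)}|H|$ for scalar bumps $\chi$ does not bound $\bigl\|\int\chi(\xi)W_\xi\,d\xi\bigr\|$-type expressions; converting the first into the second is precisely the Calder\'on--Vaillancourt-type estimate you defer to, i.e.\ the theorem's core content is asserted, not established. (Your use of $\|T_{e^{2\pi i\xi\cdot z}}\|=a_{\frac{1}{2\alpha}}(\xi)$ and the cancellation of $a_{\frac{1}{2\alpha}}^{-1}$ is the right heuristic, but it only explains why the crude $L^1$ bound fails, not how to succeed.)

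The paper closes this gap by an elementary mechanism that never leaves the world of honest symbols: Lemma \ref{Lemma 2.3} shows that conjugating $T_{g_x}$ by $W_{-i\pi x/(2\alpha)}$ produces again a Toeplitz operator, with the bounded function $\psi_x=(b_x\mathcal H_{\frac{1}{2\alpha}}g)\ast\mathcal F[a_{\frac{1}{2\alpha}}^{-1}\varphi_{x_0}]$ as symbol; then Proposition \ref{proposition 3.1} (a Schur test on the two-variable Berezin transform $\langle\psi_x k_z,k_w\rangle$, Lemma \ref{lemma 3.0}) bounds $\|T_{\psi_x}\|$ by iterated $L^1$ integrals of $|\widetilde{\psi_x}(z,w)|$, and the decay in $x$ comes from integrating by parts in the oscillatory factor $e^{-2\pi i x\cdot\eta}$ inside the convolution (display (\ref{equation 3.1})), after which the Carleson hypothesis enters only through (\ref{equation 1.1}). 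If you want to salvage your route, you would have to either reprove such a Schur/Cotlar-type bound for Weyl superpositions with distributional amplitudes (essentially redoing Calder\'on--Vaillancourt under the Carleson hypothesis), or switch to the paper's device of recognizing the recentered piece as a Toeplitz operator with a bounded symbol. Two smaller points also need repair: Bochner integrability of $y\mapsto\mathcal F(\varphi_xa_{\frac{1}{2\alpha}+t}^{-1})(y)\,T_{\tau_y\mathcal H_{\frac{1}{2\alpha}+t}g}$ does not follow from unitarity of $W_y$ alone --- you need norm-continuity of $y\mapsto T_{\tau_y\mathcal H_{\frac{1}{2\alpha}+t}g}$, which the paper proves by an explicit estimate using Lemma \ref{Lemma 2.1}; and while your Sobolev-embedding observation that $\mathcal H_{\frac{1}{2\alpha}}g\in L^\infty$ is correct, it does not shortcut anything, since boundedness of the heat transform at the critical time $\frac{1}{2\alpha}$ is not known to imply boundedness of $T_g$.
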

{\bf Remarks.} Let us make some remarks about the above theorem. First as
$$g_x=(\mathcal{H}_{\frac{1}{2\alpha}}g)\ast\mathcal{F}(\varphi_{x} a_{\frac{1}{2\alpha}}^{-1}),$$
its Fourier inverse transform
$${\mathcal F}^{-1}(g_x)(z)=[{\mathcal F}^{-1}(\mathcal{H}_{\frac{1}{2\alpha}}g)](z)\varphi_{x}(z) a_{\frac{1}{2\alpha}}^{-1}(z)$$
has support in $2Q_x=2Q_0+x$ and hence $g_x$ has compact support in the frequency domain.

Since
\begin{equation}\label{equation 1.2}
\sum_{|a|+|b|\leq 2n+1}\|\mathcal{H}_{\frac{2}{\alpha}}(|\partial^{a}_{\Re y}\partial^{b}_{\Im y}\mathcal{H}_{\frac{1}{2\alpha}}g(y)|) \|_{\infty}\lesssim
\sum_{|a|+|b|\leq 2n+1}\|\partial^{a}_{\Re y}\partial^{b}_{\Im y}\mathcal{H}_{\frac{1}{2\alpha}}g(y) \|_{\infty}\lesssim\|\mathcal{H}_tg\|_{\infty},
\end{equation}
where $t\in (0,\frac{1}{2\alpha})$, the above theorem   improves  \cite[Theorem 6.18]{Zhu}.

The decomposition of a Toeplitz operator in the above theorem is useful for us to understand two  results on Toeplitz operators and Toeplitz algebras in  \cite{Bauer2012} and \cite{Eng}.
If $\mathcal{A}$ is a set of functions that satisfies the condition in Theorem \ref{Theorem 1.2}, let $\mathfrak{T}(\mathcal{A})$ denote the $C^*-$algebra  generated by $\{T_u: u\in \mathcal{A}\}$ and $\mathfrak{T}^{1}(\mathcal{A})$ denote the closed space which is generated by $\{T_u: u\in \mathcal{A}\}$ in the norm topology.

\begin{corollary}\label{corollary 1.2}
Let g be a measurable function on $\mathbb{C}^n$ such that $gk_w\in L^2(\mathbb{C}^n,d\lambda_{\alpha})$ for any $w\in \mathbb{C}^n$. Suppose that for any $a,b\in \mathbb{Z}_{+}^n$ with $|a|+|b|\leq 2n+1$, $|\partial^{a}_{\Re y}\partial^{b}_{\Im y}\mathcal{H}_{\frac{1}{2\alpha}}g(y)|dv(y)$ is a Carleson measure on the Fock space. If for a fixed $t\geq 0,$
$\tau_y\mathcal{H}_{\frac{1}{2\alpha}+t}g$ is in $\mathcal{A}$
for any $y\in\mathbb{C}^n$, then $T_g\in \mathfrak{T}^1(\mathcal{A}).$
\end{corollary}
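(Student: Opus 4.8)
The plan is to invoke Theorem \ref{Theorem 1.2} directly and then show that each of the pieces appearing in the decomposition already lies in $\mathfrak{T}^{1}(\mathcal{A})$, so that their norm-convergent sum does too. First, since the hypothesis on the Carleson measure conditions for $|\partial^{a}_{\Re y}\partial^{b}_{\Im y}\mathcal{H}_{\frac{1}{2\alpha}}g(y)|\,dv(y)$ is exactly the hypothesis of Theorem \ref{Theorem 1.2}, we obtain, for the fixed $t\geq 0$ in the statement, the representations
\begin{equation*}
T_g=\sum_{x\in\Gamma}T_{g_x},\qquad
T_{g_x}=\int_{\mathbb{C}^n}\mathcal{F}(\varphi_x a_{\frac{1}{2\alpha}+t}^{-1})(y)\,T_{\tau_y\mathcal{H}_{\frac{1}{2\alpha}+t}g}\,dv(y),
\end{equation*}
where both the series and the integral converge in operator norm. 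By hypothesis $\tau_y\mathcal{H}_{\frac{1}{2\alpha}+t}g\in\mathcal{A}$ for every $y$, hence each $T_{\tau_y\mathcal{H}_{\frac{1}{2\alpha}+t}g}$ lies in $\mathfrak{T}^{1}(\mathcal{A})$ by definition.

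Next I would argue that the operator-norm convergent integral defining $T_{g_x}$ is a limit of Riemann sums $\sum_j \mathcal{F}(\varphi_x a_{\frac{1}{2\alpha}+t}^{-1})(y_j)\,T_{\tau_{y_j}\mathcal{H}_{\frac{1}{2\alpha}+t}g}\,\Delta v_j$, each of which is a finite linear combination of operators $T_u$ with $u\in\mathcal{A}$, hence lies in the linear span of $\{T_u:u\in\mathcal{A}\}$. Since $\mathfrak{T}^{1}(\mathcal{A})$ is the norm closure of that linear span and is therefore a closed subspace of the bounded operators, the norm limit $T_{g_x}$ belongs to $\mathfrak{T}^{1}(\mathcal{A})$. (Alternatively, one invokes that a Bochner-type integral of a norm-continuous, norm-integrable $\mathfrak{T}^{1}(\mathcal{A})$-valued function, against a scalar $L^1$ weight, stays in the closed subspace $\mathfrak{T}^{1}(\mathcal{A})$; the integrand $y\mapsto T_{\tau_y\mathcal{H}_{\frac{1}{2\alpha}+t}g}$ is norm-continuous because translation is norm-continuous on the relevant symbol class and $\|T_{\tau_y h}\|=\|T_h\|$ is translation invariant, while $\mathcal{F}(\varphi_x a_{\frac{1}{2\alpha}+t}^{-1})\in L^1$ by the smoothness and rapid decay estimates already used in the proof of Theorem \ref{Theorem 1.2}.) Finally, $T_g=\sum_{x\in\Gamma}T_{g_x}$ converges in operator norm by Theorem \ref{Theorem 1.2}, each partial sum lies in the closed subspace $\mathfrak{T}^{1}(\mathcal{A})$, and hence so does $T_g$.

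The only genuine point requiring care — the main obstacle — is justifying rigorously that the operator-norm integral $\int_{\mathbb{C}^n}\mathcal{F}(\varphi_x a_{\frac{1}{2\alpha}+t}^{-1})(y)\,T_{\tau_y\mathcal{H}_{\frac{1}{2\alpha}+t}g}\,dv(y)$ can be realized as a norm-limit of finite linear combinations of $T_u$, $u\in\mathcal{A}$; this reduces to the norm-continuity in $y$ of $y\mapsto T_{\tau_y\mathcal{H}_{\frac{1}{2\alpha}+t}g}$ together with the $L^1$-integrability of the weight $\mathcal{F}(\varphi_x a_{\frac{1}{2\alpha}+t}^{-1})$, both of which are already established (or immediate from the estimates) in the proof of Theorem \ref{Theorem 1.2}. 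Everything else is a formal consequence of $\mathfrak{T}^{1}(\mathcal{A})$ being a norm-closed linear subspace. I would present the argument in this order: quote Theorem \ref{Theorem 1.2} to get the two representations; observe membership of the integrand in $\mathfrak{T}^{1}(\mathcal{A})$; pass the integral, then the series, into the closed subspace; conclude $T_g\in\mathfrak{T}^{1}(\mathcal{A})$.
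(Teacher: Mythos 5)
Your proposal is correct and follows essentially the same route as the paper: quote Theorem \ref{Theorem 1.2} for the integral representation of each $T_{g_x}$ and the norm-convergent series, then use norm-closedness of $\mathfrak{T}^{1}(\mathcal{A})$ twice. The extra care you take with the Riemann-sum/Bochner justification of the integral is exactly what the paper leaves implicit, and it is supported by the norm-continuity of $y\mapsto T_{\tau_y\mathcal{H}_{\frac{1}{2\alpha}+t}g}$ already established in the proof of Theorem \ref{Theorem 1.2}.
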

\begin{proof}
Suppose that for a fixed $t\geq 0$,  $\tau_y\mathcal{H}_{\frac{1}{2\alpha}+t}g$ is in $\mathcal{A}$ for any $y\in\mathbb{C}^n$. Thus $T_{\tau_y\mathcal{H}_{\frac{1}{2\alpha}+t}g}$ is in $\mathfrak{T}^1(\mathcal{A}).$
By Theorem \ref{Theorem 1.2}, we have
$$T_{g_x}=\int_{\mathbb{C}^n}\mathcal{F}(\varphi_{x} a_{\frac{1}{2\alpha}+t}^{-1})(y)T_{\tau_{y}\mathcal{H}_{\frac{1}{2\alpha}+t}g}dv(y).$$
Since the above integral converges to $T_{g_x}$ in the operator norm topology, $T_{g_x}$ is in $\mathfrak{T}^1(\mathcal{A}).$
By Theorem \ref{Theorem 1.2} again, we have
$$T_g=\sum_{x\in \Gamma}T_{g_x}$$
to conclude that $T_g$ is in $\mathfrak{T}^1(\mathcal{A}).$

\end{proof}

Corollary 1.2 generalizes \cite[Proposition 7.4.1]{robert2} in the case $p = 2$ and immediately leads to the two results on the Toeplitz algebras and compact Toeplitz operators in  \cite{Bauer2012} and \cite{Eng}.

Let $C_{buc}(\mathbb{C}^n)$ be the set of bounded uniformly continuous function (\cite[page 1345]{Bauer2012}). In \cite{Bauer2012}, on Toeplitz algebras, Bauer and Isralowitz showed that $$\mathfrak{T}(L^{\infty})=\mathfrak{T}(C_{buc}(\mathbb{C}^n)).$$
 We will show how one gets easily
\begin{equation}\label{equation 4.1}
\mathfrak{T}^1(L^{\infty})=\mathfrak{T}^1(C_{buc}(\mathbb{C}^n))\text{ and }\mathfrak{T}(L^{\infty})=\mathfrak{T}(C_{buc}(\mathbb{C}^n)).
\end{equation}
To do so,  we notice that for any $g\in L^{\infty}$ and $y$,  $\tau_y\mathcal{H}_{\frac{1}{2\alpha}}g$ is in $ C_{buc}(\mathbb{C}^n)$. Thus
$T_g$ is in $\mathfrak{T}^1(C_{buc}(\mathbb{C}^n))$, and so the above corollary gives (\ref{equation 4.1}). By \cite{xia}, we have $$\mathfrak{T}^1(L^{\infty})=\mathfrak{T}(L^{\infty}).$$
Thus, we actually get
$$\mathfrak{T}^1(L^{\infty})=\mathfrak{T}^1(C_{buc}(\mathbb{C}^n))=\mathfrak{T}(L^{\infty})=\mathfrak{T}(C_{buc}(\mathbb{C}^n)).$$

On compact Toeplitz operators,   in \cite{Eng}, Engli\v s showed that   for $g\in L^{\infty}$, if the Berezin transform of $T_g$ vanishes at infinity, that is,
$$\lim_{|z|\rightarrow \infty }\mathcal{H}_{\frac{1}{\alpha}}g(z)=0,$$
then the Toeplitz operator $T_g$ is compact.

To get the above result, we observe that if for $g\in L^{\infty}$,
 $$\lim_{|z|\rightarrow \infty}\mathcal{H}_{\frac{1}{\alpha}}g(z)=0,$$
 then for each $y$ in ${\mathbb C}^n$,
 $$\lim_{|z|\rightarrow \infty}\tau_y \mathcal{H}_{\frac{1}{\alpha}}g(z)=\lim_{|z|\rightarrow \infty} \mathcal{H}_{\frac{1}{\alpha}}g(z-y)=\lim_{|w|\rightarrow \infty} \mathcal{H}_{\frac{1}{\alpha}}g(w)=0.$$
 Let $$\mathcal{A}=\{g\in L^{\infty} (\mathbb{C}^n):\lim_{|z|\rightarrow \infty}g(z)=0 \}.$$
 Then  $\tau_y \mathcal{H}_{\frac{1}{\alpha}}g$ is in $\mathcal{A}$ for any $y$. So we have that  $T_g\in\mathfrak{T}^1(\mathcal{A}).$ Noting that it is not difficult to show that $\mathfrak{T}^1(\mathcal{A})\subset \mathcal{K}$, we conclude that $T_{g}$ is compact.

This paper is organized as follows. In Section 2, using the partition of unity to decompose the symbol $g$ of the Toeplitz operator $T_g$, we will show that each part in the decomposition is bounded and has compact support in frequency domain. In Section 3, we will obtain the norm estimation of the Toeplitz operator with symbol equal to each part of the decomposition to establish  a norm estimation of $T_g$. In Section 4, we will apply our decomposition theory for a Toeplitz operator to estimate the Schatten $p$-norm of the product of two Toeplitz operators.

\section{decomposition}
In this section, first we will establish some decomposition of  a symbol $g$ of a Toeplitz operator  $T_{g}$. Even if  $g$ is a measurable function on $\mathbb{C}^n$ such that $gk_w\in L^2(\mathbb{C}^n,d\lambda_{\alpha})$ for any $w\in \mathbb{C}^n$, the Fourier transform ${\mathcal F}(f)$ may be   a tempered distribution. So we need to recall some facts on tempered distributions and the Fourier transform. Next using the decomposition of the symbol we will obtain a decomposition of the symbol $g$ of the Toeplitz operator $T_g.$

We now introduce the Schwartz space $\mathscr{S}\left(\mathbb{R}^{2n}\right)$. A smooth complex-valued function $f$ on $\mathbb{R}^{2n}$ is called a Schwartz function if for every pair of multi-indices $\alpha$ and $\beta$ there exists a positive constant $C_{\alpha,\beta}$ such that
$$\rho_{\alpha,\beta}(f)=\sup_{x\in\mathbb{R}^{2n}}|\partial^{\alpha}(x^{\beta}f(x))|=C_{\alpha,\beta}<\infty.$$
The set of all Schwartz functions on $\mathbb{R}^{2n}$ is called the Schwartz space and denoted by $\mathscr{S}\left(\mathbb{R}^{2n}\right)$. The Schwartz space $\mathscr{S}\left(\mathbb{R}^{2n}\right)$ is a locally convex topological vector space equipped with the family of seminorms $\rho_{\alpha,\beta}$.

Elements of the dual space $\mathscr{S}'\left(\mathbb{R}^{2n}\right)$ of the Schwartz space are called tempered distributions. A function $g$ is said to be a tempered distribution if for any $f$ in the Schwartz space, the pair
$(f,g)$ defined by
$$(f,g)=\int_{\mathbb{C}^n} f(x) g(x) dv(x)$$
gives a continuous linear functional on the Schwartz space.

Next, we need to recall some facts about the tempered distribution.
If $f,h\in\mathscr{S}\left(\mathbb{R}^{2n}\right)$ and $G\in \mathscr{S}'\left(\mathbb{R}^{2n}\right)$.
The Fourier transformation can be extend to the dual of the Schwartz space $\mathscr{S}\left(\mathbb{R}^{2n}\right)$ such that $\mathcal{F}(G)$ is a tempered distribution with
$$(\mathcal{F}(G),f)=(G,\mathcal{F}(f)).$$
$Gh$ is a tempered distribution given by
$$(Gh,f)=(G,hf),$$
and $G\ast h$ is a tempered distribution such that
$$(G\ast h,f)=(G,\tilde{h}\ast f),$$
where $\tilde{h}(x)=h(-x)$ for any $x\in \mathbb{R}^{2n}$.
$\tilde {G}$ is also a tempered distribution such that
$$(\tilde{G},h)=(G,\tilde{h}).$$
For any $y\in \mathbb{R}^{2n}$, $\tau_{y}$ is a translation operator on $\mathscr{S}\left(\mathbb{R}^{2n}\right)$ such that $\tau_{y}f(x)=f(x-y)$.
$\tau_{y}$ can be extended on $\mathscr{S}'\left(\mathbb{R}^{2n}\right)$ such that
$$ (\tau_{y}G,f)=(G,\tau^{-1}_{y}f).$$

The following lemma says that a Carleson measure on the Fock space induces a tempered distribution.
\begin{lemma}\label{Lemma 2.1}
Let $h$ be a positive function on $\mathbb{C}^n$ such that $h(y)dv(y)$ is a Carleson measure on the Fock space, then $h$ is a tempered distribution. Moreover, if $f$ is a Schwartz function, then $h\ast f$ is in $ L^{\infty}(\mathbb{C}^n).$
\end{lemma}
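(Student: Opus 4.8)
The plan is to exploit the fact that a Carleson measure on the Fock space puts uniformly bounded mass on the unit cubes $Q_x$, $x\in\Gamma$, and to pair this with the rapid decay of a Schwartz function, summed against the convergent lattice series \eqref{equation 2.1}.

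First I would record the uniform mass bound. Each cube $Q_x$ (and, for any fixed $z\in\mathbb{C}^n$, each translate $z-Q_x$) is contained in a ball of a fixed radius $r_0$ depending only on $n$, centred at the appropriate point. Hence the Carleson condition --- equivalently the right-hand inequality in \eqref{equation 1.1} --- supplies a constant $M=M(n,\alpha,h)<\infty$ such that
$$\int_{Q_x}h\,dv\le M\quad\text{and, more generally,}\quad \int_{Q_x}h(z-w)\,dv(w)=\int_{z-Q_x}h\,dv\le M$$
for all $x\in\Gamma$ and all $z\in\mathbb{C}^n$. Next I would record the decay of Schwartz functions: for $f\in\mathscr S(\mathbb R^{2n})$ set $\rho(f)=\sup_{w}(1+|w|)^{2n+1}|f(w)|$, which is dominated by a finite sum of the seminorms $\rho_{0,\beta}$ with $|\beta|\le 2n+1$. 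If $w\in Q_x$ then $|w|\ge|x|-c_n$ for a dimensional constant $c_n$, so $1+|w|\ge c(1+|x|)$ for some $c=c(n)>0$ (handling the finitely many $x$ with $|x|\le 2c_n$ separately), and therefore $\sup_{w\in Q_x}|f(w)|\le C\rho(f)(1+|x|)^{-(2n+1)}$.

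With these two facts, both assertions follow from the same splitting of the integral over $\mathbb{C}^n=\bigsqcup_{x\in\Gamma}Q_x$. For the first statement, using $h\ge 0$,
$$|(h,f)|\le\int_{\mathbb{C}^n}h|f|\,dv=\sum_{x\in\Gamma}\int_{Q_x}h|f|\,dv\le\sum_{x\in\Gamma}\Big(\sup_{Q_x}|f|\Big)\int_{Q_x}h\,dv\le CM\rho(f)\sum_{x\in\Gamma}\frac{1}{(1+|x|)^{2n+1}},$$
which is finite by \eqref{equation 2.1}; since $\rho(f)$ is controlled by finitely many Schwartz seminorms, $f\mapsto(h,f)$ is a continuous linear functional on $\mathscr S(\mathbb R^{2n})$, i.e.\ $h\in\mathscr S'(\mathbb R^{2n})$. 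For the second statement, for every $z\in\mathbb{C}^n$,
$$|h\ast f(z)|\le\sum_{x\in\Gamma}\int_{Q_x}h(z-w)|f(w)|\,dv(w)\le\sum_{x\in\Gamma}\Big(\sup_{w\in Q_x}|f(w)|\Big)\int_{z-Q_x}h\,dv\le CM\rho(f)\sum_{x\in\Gamma}\frac{1}{(1+|x|)^{2n+1}},$$
and the right-hand side does not depend on $z$; hence $h\ast f\in L^\infty(\mathbb{C}^n)$.

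The only step that requires any care is turning the pointwise decay of $f$ into a bound that is both summable over the lattice $\Gamma$ and uniform in the translation parameter $z$ --- that is, checking that the exponent $2n+1$ (precisely the one making \eqref{equation 2.1} converge) suffices and that the cube-to-ball inclusion used for the Carleson estimate is uniform in $x$ and $z$. Everything else is a routine decomposition of $\mathbb{C}^n$ into unit cubes.
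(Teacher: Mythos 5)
Your proposal is correct and follows essentially the same route as the paper: decompose $\mathbb{C}^n$ into the unit cubes $Q_x$, use the uniform Carleson mass bound on each (translated) cube together with the Schwartz decay $\sup_{Q_x}|f|\lesssim \rho(f)(1+|x|)^{-(2n+1)}$, and sum via \eqref{equation 2.1}, observing that the same estimate is uniform in $z$ for $h\ast f$. The only cosmetic difference is that the paper cites \cite[Proposition 2.3.4]{Grafakos} to pass from the single weighted sup bound to continuity on $\mathscr{S}(\mathbb{R}^{2n})$, whereas you verify directly that this quantity is dominated by finitely many Schwartz seminorms.
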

\begin{proof} Since $h(y)dv(y)$ is a Carleson measure on the Fock space,
 we have
$$\sup_{x\in\mathbb{C}^n}\int_{B(x,2)} |h(y)|dv(y)=C_2  < \infty.$$
To show that $h$ is a tempered distribution, by \cite[Proposition 2.3.4]{Grafakos}, it is sufficient to show
$$|\int_{\mathbb{C}^n}h(y)f(y)dv(y)| \leq C_3 \sup_{y}(1+|\Re y|+|\Im y|)^{2n+1}|f(y)| $$
for all $f$ in the Schwartz space $\mathscr{S}\left(\mathbb{R}^{2n}\right)$ and for some positive constant $C_3.$
As we pointed out in the introduction that $\{Q_x\}_{x\in \Gamma}$ tiles ${\mathbb C}^n$, for any $f$   in the Schwartz space $\mathscr{S}\left(\mathbb{R}^{2n}\right)$, we have
\begin{align*}
&|\int_{\mathbb{C}^n}h(y)f(y)dv(y)|=|\int_{\cup_{x\in\Gamma}Q_x}h(y)f(y)dv(y)|\\
\leq& \sum_{x\in \Gamma }|\int_{Q_x}h(y)f(y)dv(y)|\\
\leq& \sum_{x\in \Gamma}|\int_{Q_x}\frac{1}{(1+|\Re y|+|\Im y|)^{2n+1}}h(y)
(1+|\Re y|+|\Im y|)^{2n+1}f(y)dv(y)|.
\end{align*}
Since for any $y\in Q_x$ with $x\in \Gamma$, there is a positive constant $C$ such that
$$\frac{1}{(1+|\Re y|+|\Im y|)^{2n+1}} \leq C \frac{1}{(1+|\Re x|+|\Im x|)^{2n+1}}.$$
Thus
\begin{align*}
&|\int_{\mathbb{C}^n}h(y)f(y)dv(y)|\\
\leq &\sum_{x\in \Gamma}|\int_{Q_x}\frac{1}{(1+|\Re y|+|\Im y|)^{2n+1}}h(y)
(1+|\Re y|+|\Im y|)^{2n+1}f(y)dv(y)|\\
\leq&\sum_{x\in \Gamma } C \frac{1}{(1+|\Re x|+|\Im x|)^{2n+1}}\int_{Q_x}|h(y)
(1+|\Re y|+|\Im y|)^{2n+1}f(y)|dv(y)\\
\leq&\sum_{x\in \Gamma } C \frac{1}{(1+|\Re x|+|\Im x|)^{2n+1}}\int_{Q_x}h(y)dv(y)
\sup_{y}(1+|\Re y|+|\Im y|)^{2n+1}|f(y)|\\
\leq&\sum_{x\in \Gamma } C \frac{1}{(1+|\Re x|+|\Im x|)^{2n+1}}\int_{B(x,2)}h(y)dv(y)
\sup_{y}(1+|\Re y|+|\Im y|)^{2n+1}|f(y)| \\
\leq &C_3  \sup_{y}(1+|\Re y|+|\Im y|)^{2n+1}|f(y)|
\end{align*}
for some positive constant $C_3$.
The last inequality follows from
(\ref{equation 2.1}).
Thus  $h$ is a tempered distribution.

In fact, the above argument gives that for any $z\in \mathbb{C}^n$
$$
|\int_{\mathbb{C}^n}h(z-y)f(y)dv(y)|\leq C_3\sup_{y}(1+|\Re y|+|\Im y|)^{2n+1}|f(y)|.
$$
This implies that $h\ast f$ is in $L^{\infty}(\mathbb{C}^n)$.
\end{proof}

\begin{lemma}\label{Lemma 2.2}
Let g be a measurable function on $\mathbb{C}^n$ such that $gk_w\in L^2(\mathbb{C}^n,d\lambda_{\alpha})$ for any $w\in \mathbb{C}^n$. If $|\mathcal{H}_{\frac{1}{2\alpha}}g(y)|dv(y)$ is a Carleson measure on the Fock space, then $(\mathcal{H}_{\frac{1}{2\alpha}}g)\ast\mathcal{F}(\varphi_{x} a_{\frac{1}{2\alpha}}^{-1})\in L^{\infty}(\mathbb{C}^n)$ for any $x\in \Gamma$ and
$$\mathcal{H}_{\frac{1}{\alpha}}g(z)=\sum_{x\in \Gamma}\mathcal{H}_{\frac{1}{\alpha}}\Big[ (\mathcal{H}_{\frac{1}{2\alpha}}g)\ast\mathcal{F}(\varphi_{x} a_{\frac{1}{2\alpha}}^{-1})\Big](z),$$
for any $z\in \mathbb{C}^n.$
\end{lemma}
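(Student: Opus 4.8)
First I would fix notation: write $h=\mathcal H_{\frac1{2\alpha}}g$ and $u_x=\varphi_x\,a_{\frac1{2\alpha}}^{-1}$, so that $g_x=h\ast\mathcal F u_x$. Since $\varphi_x$ is smooth and supported in the bounded set $2Q_x$, each $u_x$ lies in $\mathscr S(\mathbb R^{2n})$, hence so does $\mathcal F u_x$. The hypothesis is that $|h|\,dv$ is a Carleson measure on the Fock space; in particular $h$ is locally integrable (indeed $\mathcal H_{\frac1{2\alpha}}|g|(z)<\infty$ for every $z$ by Cauchy--Schwarz, because $gk_z\in L^2(\mathbb C^n,d\lambda_\alpha)$), and the computation in the proof of Lemma~\ref{Lemma 2.1}, which only estimates $|h|$ against a test function, gives for every $f\in\mathscr S(\mathbb R^{2n})$ and every $z\in\mathbb C^n$
$$\Big|\int_{\mathbb C^n}h(z-y)f(y)\,dv(y)\Big|\le C\,\sup_{y}(1+|\Re y|+|\Im y|)^{2n+1}|f(y)|,$$
with $C$ depending only on $n$ and the Carleson constant of $|h|\,dv$; the right-hand side is dominated by finitely many of the Schwartz seminorms $\rho_{0,\beta}$. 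Applying this with $f=\mathcal F u_x$ proves the first assertion, $g_x\in L^\infty(\mathbb C^n)$.

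The core of the proof is the identity $\sum_{x\in\Gamma}\psi_x=\gamma_{\frac1{2\alpha}}$, where $\psi_x:=\mathcal F u_x\ast\gamma_{\frac1\alpha}$. Using $\mathcal F\mathcal F f=\tilde f$, $\mathcal F\gamma_{\frac1\alpha}=a_{\frac1\alpha}$ and $a_{\frac1{2\alpha}}^{-1}a_{\frac1\alpha}=a_{\frac1{2\alpha}}$, one computes
$$\mathcal F\psi_x(\xi)=\widetilde{u_x}(\xi)\,a_{\frac1\alpha}(\xi)=\varphi_x(-\xi)\,a_{\frac1{2\alpha}}^{-1}(-\xi)\,a_{\frac1\alpha}(\xi)=\varphi_0(-\xi-x)\,a_{\frac1{2\alpha}}(\xi).$$
Thus $\mathcal F\psi_x$ is smooth and supported in $-x-2Q_0$; on that set $a_{\frac1{2\alpha}}$ and all of its derivatives are $O\big((1+|x|)^N e^{-c|x|^2}\big)$ for a constant $c=c(\alpha)>0$, and since the support has bounded volume the same bound holds for every $L^1$ norm $\|\partial^\beta\mathcal F\psi_x\|_{L^1}$. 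Because $\mathcal F^{-1}$ is continuous on $\mathscr S(\mathbb R^{2n})$, this forces $\rho_{a,b}(\psi_x)\lesssim_{a,b}(1+|x|)^{N_{a,b}}e^{-c|x|^2}$ for every pair of multi-indices $a,b$, so $\sum_{x\in\Gamma}\psi_x$ converges in $\mathscr S(\mathbb R^{2n})$. Its Fourier transform equals $\sum_{x\in\Gamma}\varphi_0(-\xi-x)a_{\frac1{2\alpha}}(\xi)=a_{\frac1{2\alpha}}(\xi)\sum_{x\in\Gamma}\varphi_x(-\xi)=a_{\frac1{2\alpha}}(\xi)$ (the sum over $x$ is locally finite and $\sum_{x}\varphi_x\equiv1$), and the same decay estimates show this convergence is also in $\mathscr S$. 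By Fourier inversion, $\sum_{x\in\Gamma}\psi_x=\mathcal F^{-1}a_{\frac1{2\alpha}}=\gamma_{\frac1{2\alpha}}$.

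To finish I would assemble the pieces. By associativity of convolution of a tempered distribution with Schwartz functions,
$$\mathcal H_{\frac1\alpha}g_x=g_x\ast\gamma_{\frac1\alpha}=(h\ast\mathcal F u_x)\ast\gamma_{\frac1\alpha}=h\ast\psi_x.$$
The bound displayed above gives $\sup_z|(h\ast\psi_x)(z)|\le C\,\sup_y(1+|\Re y|+|\Im y|)^{2n+1}|\psi_x(y)|$, and the decay estimate for $\psi_x$ makes the right-hand side summable over $x\in\Gamma$; hence $\sum_{x}h\ast\psi_x$ converges uniformly on $\mathbb C^n$, and since $h$ acts continuously on $\mathscr S$ while $\sum_x\psi_x\to\gamma_{\frac1{2\alpha}}$ in $\mathscr S$,
$$\sum_{x\in\Gamma}\mathcal H_{\frac1\alpha}g_x=h\ast\Big(\sum_{x\in\Gamma}\psi_x\Big)=h\ast\gamma_{\frac1{2\alpha}}=\mathcal H_{\frac1{2\alpha}}\big(\mathcal H_{\frac1{2\alpha}}g\big)=\mathcal H_{\frac1\alpha}g,$$
where the last step is the heat semigroup identity $\gamma_{\frac1{2\alpha}}\ast\gamma_{\frac1{2\alpha}}=\gamma_{\frac1\alpha}$ applied to $g$, legitimate by Fubini since $\mathcal H_{\frac1\alpha}|g|(z)<\infty$ for every $z$. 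This is the second assertion.

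The main obstacle is exactly that the naive statement ``$\sum_{x\in\Gamma}g_x=g$'' is false: the symbols $g_x$ do not tend to $0$, because the factor $a_{\frac1{2\alpha}}^{-1}$ in $u_x$ makes $\|g_x\|_\infty$ grow roughly like $e^{\pi^2|x|^2/(2\alpha)}$. Summability is recovered only after one additional heat smoothing, which in the frequency variable supplies the factor $a_{\frac1\alpha}=a_{\frac1{2\alpha}}^2$ and thereby annihilates the localized exponential growth carried by $\varphi_x a_{\frac1{2\alpha}}^{-1}$; establishing this cancellation quantitatively — i.e. the rapid decay of $\psi_x$ in $x$ and the convergence $\sum_{x\in\Gamma}\psi_x=\gamma_{\frac1{2\alpha}}$ in $\mathscr S(\mathbb R^{2n})$ — is the technical heart of the argument.
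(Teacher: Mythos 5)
Your argument is correct and follows essentially the same route as the paper: boundedness of $g_x$ via the Carleson/tempered-distribution estimate of Lemma~\ref{Lemma 2.1}, the heat semigroup identity $\gamma_{\frac{1}{2\alpha}}\ast\gamma_{\frac{1}{2\alpha}}=\gamma_{\frac{1}{\alpha}}$ justified by finiteness of $\mathcal{H}_{\frac{1}{\alpha}}|g|$, the cancellation $a_{\frac{1}{2\alpha}}^{-1}a_{\frac{1}{\alpha}}=a_{\frac{1}{2\alpha}}$ so that $\mathcal{H}_{\frac{1}{\alpha}}g_x=\mathcal{H}_{\frac{1}{2\alpha}}g\ast\mathcal{F}[\varphi_x a_{\frac{1}{2\alpha}}]$, and convergence of the partition-of-unity sum in $\mathscr{S}$ paired against the tempered distribution $\mathcal{H}_{\frac{1}{2\alpha}}g$. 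The only difference is cosmetic: you work with $\psi_x=\mathcal{F}[\varphi_x a_{\frac{1}{2\alpha}}]$ on the physical side and give the quantitative seminorm decay in $x$ that the paper asserts when it says $\sum_{x\in\Gamma}\varphi_x a_{\frac{1}{2\alpha}}$ converges to $a_{\frac{1}{2\alpha}}$ in the Schwartz space.
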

\begin{proof}

Since for each $x\in \Gamma$, $\varphi_{x} a_{\frac{1}{2\alpha}}^{-1}$ is a smooth function with compact support,
 $\varphi_{x} a_{\frac{1}{2\alpha}}^{-1}$ is in the Schwartz space. Thus $\mathcal{F}(\varphi_{x} a_{\frac{1}{2\alpha}}^{-1})$ is a Schwartz function as the Fourier transform $\mathcal F$ maps the Schwartz space onto the Schwartz space.   So  Lemma \ref{Lemma 2.1} gives that
$$(\mathcal{H}_{\frac{1}{2\alpha}}g)\ast\mathcal{F}(\varphi_{x} a_{\frac{1}{2\alpha}}^{-1})\in L^{\infty}({\mathbb C}^n)$$
as $|\mathcal{H}_{\frac{1}{2\alpha}}g(y)|dv(y)$ is a Carleson measure on the Fock space.

Since $gk_w\in L^2(\mathbb{C}^n,d\lambda_{\alpha})$ for any $w\in \mathbb{C}^n$, we have
$$\mathcal{H}_{{\frac{1}{\alpha}}}g(z)=\langle g k_z,k_z\rangle= g \ast \gamma_{\frac{1}{\alpha}}(z)$$
is finite for any $z\in \mathbb{C}^{n}$. Then by the Fubini theorem, we have
$$\mathcal{H}_{\frac{1}{\alpha}}g= (g\ast \gamma_{\frac{1}{2\alpha}})\ast \gamma_{\frac{1}{2\alpha}}=(\mathcal{H}_{\frac{1}{2\alpha}}g)\ast \gamma_{\frac{1}{2\alpha}}=(\mathcal{H}_{\frac{1}{2\alpha}}g)\ast \mathcal{F}( a_{\frac{1}{2\alpha}}).$$
By Lemma \ref{Lemma 2.1}, $\mathcal{H}_{\frac{1}{2\alpha}}g$ is a tempered distribution, thus $\tau_y \widetilde{\mathcal{H}_{\frac{1}{2\alpha}}g}$ is also a tempered distribution.
So we have
\begin{align*}
\mathcal{H}_{\frac{1}{\alpha}}g(z)&=\int_{\mathbb{C}^n}\mathcal{H}_{\frac{1}{2\alpha}}g(z-x) \mathcal{F}(a_{\frac{1}{2\alpha}})(x)dv(x)\\
&=(\tau_z \widetilde{\mathcal{H}_{\frac{1}{2\alpha}}g},\mathcal{F}(a_{\frac{1}{2\alpha}}))\\
&=(\mathcal{F}\big[ \tau_z \widetilde{\mathcal{H}_{\frac{1}{2\alpha}}g}\big],a_{\frac{1}{2\alpha}}).
\end{align*}
On the other hand, using properties of the convolution, the Fourier transform and the heat transform, we have
\begin{align*}
\mathcal{H}_{\frac{1}{\alpha}}\Big[(\mathcal{H}_{\frac{1}{2\alpha}}g)\ast\mathcal{F}(\varphi_{x} a_{\frac{1}{2\alpha}}^{-1})\Big](z)
&=(\mathcal{H}_{\frac{1}{2\alpha}}g)\ast\mathcal{F}(\varphi_{x} a_{\frac{1}{2\alpha}}^{-1})\ast\gamma_{\frac{1}{\alpha}}(z)\\
&=(\mathcal{H}_{\frac{1}{2\alpha}}g)\ast[\mathcal{F}(\varphi_{x} a_{\frac{1}{2\alpha}}^{-1})\ast\gamma_{\frac{1}{\alpha}}](z)\\
&=(\mathcal{H}_{\frac{1}{2\alpha}}g)\ast[\mathcal{F}\mathcal{F}^{-1}[\mathcal{F}(\varphi_{x} a_{\frac{1}{2\alpha}}^{-1})\ast\gamma_{\frac{1}{\alpha}}]](z)\\
&=(\mathcal{H}_{\frac{1}{2\alpha}}g)\ast\mathcal{F}\big[(\mathcal{F}^{-1}\gamma_{\frac{1}{\alpha}})\varphi_{x} a_{\frac{1}{2\alpha}}^{-1}\big](z)\\
&=(\mathcal{H}_{\frac{1}{2\alpha}}g)\ast\mathcal{F}\big[a_{\frac{1}{\alpha}}\varphi_{x} a_{\frac{1}{2\alpha}}^{-1}\big](z)\\
&=(\mathcal{H}_{\frac{1}{2\alpha}}g)\ast \mathcal{F}\big[\varphi_{x} a_{\frac{1}{2\alpha}}\big](z)\\
&=(\tau_z \widetilde{\mathcal{H}_{\frac{1}{2\alpha}}g},\mathcal{F}\big[\varphi_{x} a_{\frac{1}{2\alpha}}\big])\\
&=(\mathcal{F}\big[\tau_z \widetilde{\mathcal{H}_{\frac{1}{2\alpha}}g}\big], \varphi_{x} a_{\frac{1}{2\alpha}}).
\end{align*}
Since $\mathcal{F}\big[\tau_z \widetilde{\mathcal{H}_{\frac{1}{2\alpha}}g}\big]$ is a tempered distribution and $\sum_{x\in \Gamma}\varphi_{x} a_{\frac{1}{2\alpha}}$ converges to $a_{\frac{1}{2\alpha}}$ in the Schwartz space, we have
$$\sum_{x\in \Gamma}(\mathcal{F}\big[\tau_z \widetilde{\mathcal{H}_{\frac{1}{2\alpha}}g}\big], \varphi_{x}a_{\frac{1}{2\alpha}})=(\mathcal{F}\big[ \tau_z \widetilde{\mathcal{H}_{\frac{1}{2\alpha}}g}\big],a_{\frac{1}{2\alpha}}).$$
Thus we conclude
\begin{align*}
\mathcal{H}_{\frac{1}{\alpha}}g(z) &=(\mathcal{F}\big[ \tau_z \widetilde{\mathcal{H}_{\frac{1}{2\alpha}}g}\big],a_{\frac{1}{2\alpha}})\\
 & =\sum_{x\in \Gamma}(\mathcal{F}\big[\tau_z \widetilde{\mathcal{H}_{\frac{1}{2\alpha}}g}\big], \varphi_{x}a_{\frac{1}{2\alpha}})\\
 &=\sum_{x\in \Gamma}\mathcal{H}_{\frac{1}{\alpha}}\Big[(\mathcal{H}_{\frac{1}{2\alpha}}g)\ast\mathcal{F}(\varphi_{x} a_{\frac{1}{2\alpha}}^{-1})\Big](z),
\end{align*}
to complete the proof.
\end{proof}

For $x$ in $\Gamma$, let $$g_x=(\mathcal{H}_{\frac{1}{2\alpha}}g)\ast\mathcal{F}(\varphi_{x} a_{\frac{1}{2\alpha}}^{-1}).$$
The above lemma tells us that the summation of the Berezin transform  of $T_{g_x}$ is equal to the Berezin transform  of $T_{g}$. In fact, we will show that $\sum_{x\in \Gamma}T_{g_x}$ converges to $T_g$ in operator norm topology in the last section.

For any $z\in\mathbb{C}^n$, we define a unitary operator $W_z$ on the Fock space such that
$$W_zf(w)=f(w-z)k_z(w)$$
for any $f$ in the Fock space. We have  $$W_z^{*}=W_{-z}.$$
Let
$$b_x(w)=e^{2\pi i w\cdot x}$$
for any $w,x\in\mathbb{C}^n.$

\begin{lemma}\label{Lemma 2.3}
Let g be a measurable function on $\mathbb{C}^n$ such that $gk_w\in L^2(\mathbb{C}^n,d\lambda_{\alpha})$ for any $w\in \mathbb{C}^n$. If $|\mathcal{H}_{\frac{1}{2\alpha}}g(y)|dv(y)$ is a Carleson measure on the Fock space, then
$$W_{\frac{-i\pi x}{2\alpha}}T_{g_x}W_{\frac{-i\pi x}{2\alpha}}=
T_{(b_{x}\mathcal{H}_{\frac{1}{2\alpha}}g)\ast\mathcal{F}[a_{\frac{1}{2\alpha}}^{-1}\varphi_{x_0}]}.$$
Further, we have
$$
T_{g_x}W_{-\frac{i\pi x}{\alpha}}= T_{(b_{x}\mathcal{H}_{\frac{1}{2\alpha}}g)\ast \tau_{-\frac{i\pi x}{2\alpha} }\mathcal{F}[a_{\frac{1}{2\alpha}}^{-1}\varphi_{x_0}]} \text{\quad and \quad} W_{-\frac{i\pi x}{\alpha}}T_{g_x}= T_{(b_{x}\mathcal{H}_{\frac{1}{2\alpha}}g)\ast \tau_{\frac{i\pi x}{2\alpha} }\mathcal{F}[a_{\frac{1}{2\alpha}}^{-1}\varphi_{x_0}]}.
$$
\end{lemma}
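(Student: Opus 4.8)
The plan is to reduce the three identities to a small amount of Weyl–operator calculus together with the Fourier–inversion structure of $g_x$ from Lemma~\ref{Lemma 2.2}. First I would record three facts about the displacement operators $W_z$, each obtained by a direct computation on the reproducing kernels $K_c$, together with the observation that the same formula $W_zf(w)=f(w-z)k_z(w)$ defines a unitary operator on all of $L^2(\mathbb C^n,d\lambda_\alpha)$ commuting with $P$: namely (i) the covariance relation $W_zT_fW_z^{*}=T_{\tau_zf}$, valid for $f$ in the class under consideration; (ii) the composition law $W_zW_w=e^{-i\alpha\,\Im\langle z,w\rangle}W_{z+w}$, in particular $W_{\frac{-i\pi x}{2\alpha}}W_{\frac{-i\pi x}{2\alpha}}=W_{-\frac{i\pi x}{\alpha}}$; and, the crucial one, (iii)
\[
T_{b_y}=e^{-\frac{\pi^{2}}{2\alpha}|y|^{2}}\,W_{-\frac{i\pi y}{\alpha}}\qquad(y\in\mathbb C^n),
\]
so that $W_{-\frac{i\pi x}{\alpha}}=e^{\frac{\pi^{2}}{2\alpha}|x|^{2}}T_{b_x}$; (ii) and (iii) together show that products such as $W_zT_{b_{-\xi}}W_z$, $T_{b_{-\xi}}W_{-\frac{i\pi x}{\alpha}}$, $W_{-\frac{i\pi x}{\alpha}}T_{b_{-\xi}}$ are again scalar multiples of plane–wave Toeplitz operators $T_{b_{x-\xi}}$.

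Next I would set up a Fourier superposition of $T_{g_x}$. By Lemma~\ref{Lemma 2.1}, $\mathcal H_{\frac{1}{2\alpha}}g$ is a tempered distribution, and $\varphi_xa_{\frac{1}{2\alpha}}^{-1}$ is smooth with compact support, so $\Phi:=\mathcal F^{-1}(\mathcal H_{\frac{1}{2\alpha}}g)\,\varphi_xa_{\frac{1}{2\alpha}}^{-1}$ is a compactly supported distribution of finite order with $g_x=\mathcal F(\Phi)$, i.e.\ $g_x(w)=(\Phi,\ \xi\mapsto b_{-\xi}(w))$. Pairing against fixed normalized kernels and using that $\xi\mapsto\langle T_{b_{-\xi}}k_a,k_b\rangle$ is a smooth function on a neighbourhood of $\operatorname{supp}\Phi$, one gets $\langle T_{g_x}k_a,k_b\rangle=(\Phi,\ \xi\mapsto\langle T_{b_{-\xi}}k_a,k_b\rangle)$ for all $a,b$; I write this as $T_{g_x}=(\Phi,\ \xi\mapsto T_{b_{-\xi}})$, and likewise $T_{g_x}W_{-\frac{i\pi x}{\alpha}}=(\Phi,\ \xi\mapsto T_{b_{-\xi}}W_{-\frac{i\pi x}{\alpha}})$ and $W_{-\frac{i\pi x}{\alpha}}T_{g_x}=(\Phi,\ \xi\mapsto W_{-\frac{i\pi x}{\alpha}}T_{b_{-\xi}})$.

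Now I would conjugate inside the pairing and re-identify the symbol. With $z=\frac{-i\pi x}{2\alpha}$, facts (i)--(iii) give, after the oscillatory phases from (ii) cancel and the Gaussian prefactors are collected, $W_zT_{b_{-\xi}}W_z=e^{\frac{\pi^{2}}{2\alpha}(|x|^{2}-2\Re\langle x,\xi\rangle)}T_{b_{x-\xi}}$, together with the analogous one-sided formulas. Feeding these back yields $W_zT_{g_x}W_z=T_F$ with $F(w)=(\Phi,\ \xi\mapsto e^{\frac{\pi^{2}}{2\alpha}(|x|^{2}-2\Re\langle x,\xi\rangle)}b_{x-\xi}(w))$. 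Since $b_{x-\xi}(w)=b_x(w)e^{-2\pi iw\cdot\xi}$ and $b_x(w)\mathcal F(\Psi)(w)=\mathcal F(\tau_{-x}\Psi)(w)$, one rewrites $F=e^{\frac{\pi^{2}}{2\alpha}|x|^{2}}\mathcal F\big[\tau_{-x}\big(e^{-\frac{\pi^{2}}{\alpha}\Re\langle x,\cdot\rangle}\Phi\big)\big]$; then, completing the square in $a_{\frac{1}{2\alpha}}^{-1}(\xi+x)$, using $\tau_{-x}\mathcal F^{-1}(u)=\mathcal F^{-1}(b_xu)$ and the convolution theorem $\mathcal F(\mathcal F^{-1}u\cdot v)=u\ast\mathcal F(v)$, the factors $e^{\pm\frac{\pi^{2}}{2\alpha}|x|^{2}}$ cancel and one is left with
\[
F=(b_x\mathcal H_{\frac{1}{2\alpha}}g)\ast\mathcal F\big[a_{\frac{1}{2\alpha}}^{-1}\varphi_{x_0}\big],
\]
which is the first identity. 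The two one-sided identities come out the same way, or more quickly from the first: since $T_{g_x}=W_z^{*}T_FW_z^{*}$ and $W_z^{*}W_{-\frac{i\pi x}{\alpha}}=W_z$, one has $T_{g_x}W_{-\frac{i\pi x}{\alpha}}=W_z^{*}T_FW_z$ and $W_{-\frac{i\pi x}{\alpha}}T_{g_x}=W_zT_FW_z^{*}$, and applying (i) once more converts each into $T_{\tau_{\pm z}F}$, whose symbol is $(b_x\mathcal H_{\frac{1}{2\alpha}}g)\ast\tau_{\pm\frac{i\pi x}{2\alpha}}\mathcal F[a_{\frac{1}{2\alpha}}^{-1}\varphi_{x_0}]$, the translation recording the asymmetric position of $W_{-\frac{i\pi x}{\alpha}}$.

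The step I expect to be the main obstacle is the bookkeeping of these exponential factors. The weight $a_{\frac{1}{2\alpha}}^{-1}$ is a growing Gaussian, admissible as a test function only thanks to the cut-off $\varphi_x$; conjugation by the operators $T_{b_y}$ — which by (iii) are genuine scalar multiples of Weyl operators with purely imaginary displacement — reweights symbols by exactly compensating Gaussians, and one must verify that the prefactors produced by (iii) and by completing the square cancel, while the phases from (ii) and from the translation/modulation identities assemble precisely into the stated $\tau_{\pm\frac{i\pi x}{2\alpha}}$. A secondary technical point is to justify pushing the bounded operators $W_z$ through the distributional pairing $(\Phi,\ \xi\mapsto T_{b_{-\xi}})$; this is legitimate because $\Phi$ is compactly supported of finite order and the operator-valued integrands, once tested against fixed $k_a,k_b$, are smooth on a neighbourhood of $\operatorname{supp}\Phi$, so the pairings converge and all manipulations are term by term.
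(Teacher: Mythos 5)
Your route is genuinely different from the paper's and, in substance, it works. The paper proves the first identity by brute force: it computes the Berezin transform of $T_{(b_{x}\mathcal{H}_{\frac{1}{2\alpha}}g)\ast\mathcal{F}[a_{\frac{1}{2\alpha}}^{-1}\varphi_{x_0}]}$ through convolution identities and the explicit Gaussian integral $\mathcal{F}[a_{\frac{1}{2\alpha}}\tau_{x}a_{\frac{1}{2\alpha}}]$, computes $\langle W_{\frac{-i\pi x}{2\alpha}}T_{g_x}W_{\frac{-i\pi x}{2\alpha}}k_z,k_z\rangle$ directly from the kernel formulas, matches the two, and invokes injectivity of the Berezin transform; the one-sided identities are then obtained by sandwiching with Weyl operators. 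You instead write $g_x=\mathcal{F}(\Phi)$ with $\Phi=\mathcal{F}^{-1}(\mathcal{H}_{\frac{1}{2\alpha}}g)\varphi_xa_{\frac{1}{2\alpha}}^{-1}$ compactly supported, view $T_{g_x}$ as a superposition of the operators $T_{b_{-\xi}}=e^{-\frac{\pi^2}{2\alpha}|\xi|^2}W_{\frac{i\pi\xi}{\alpha}}$, and conjugate termwise; I checked your fact (iii), the phase cancellation in (ii), and the completing-the-square/modulation bookkeeping, and they do reproduce $(b_x\mathcal{H}_{\frac{1}{2\alpha}}g)\ast\mathcal{F}[a_{\frac{1}{2\alpha}}^{-1}\varphi_{x_0}]$. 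Your approach is more structural (it explains why conjugation by these imaginary-displacement Weyl operators turns the growing weight $a_{\frac{1}{2\alpha}}^{-1}$ into exactly this convolution), at the price of the distributional pairing; the justification you sketch is sound, since $(\mathcal{F}\Phi,M_{a,b})=(\Phi,\mathcal{F}M_{a,b})$ with $M_{a,b}$ Schwartz, though you should say explicitly that $F\in L^{\infty}$ (Lemma \ref{Lemma 2.1}) and $g_x\in L^{\infty}$ (Lemma \ref{Lemma 2.2}), so that equality of matrix coefficients against kernels gives equality of bounded operators.

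The one genuine defect is your ``$\pm$'' bookkeeping in the last step, which is exactly where care is needed. With the paper's conventions one has $W_cT_hW_c^{*}=T_{\tau_ch}$ (this is the covariance relation the paper itself states and uses in the proof of Theorem \ref{Theorem 1.2}), and with $z=-\frac{i\pi x}{2\alpha}$ your reduction gives $T_{g_x}W_{-\frac{i\pi x}{\alpha}}=W_{-z}T_FW_{-z}^{*}=T_{\tau_{-z}F}=T_{(b_x\mathcal{H}_{\frac{1}{2\alpha}}g)\ast\tau_{+\frac{i\pi x}{2\alpha}}\mathcal{F}[a_{\frac{1}{2\alpha}}^{-1}\varphi_{x_0}]}$ and $W_{-\frac{i\pi x}{\alpha}}T_{g_x}=T_{\tau_{z}F}=T_{(b_x\mathcal{H}_{\frac{1}{2\alpha}}g)\ast\tau_{-\frac{i\pi x}{2\alpha}}\mathcal{F}[a_{\frac{1}{2\alpha}}^{-1}\varphi_{x_0}]}$, i.e.\ the two displayed one-sided formulas with the translations interchanged relative to the statement (the paper's own final step applies the covariance relation with the opposite sign, so the stated attribution appears to be a slip there rather than an error in your method). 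So do not leave this as ``$\tau_{\pm z}$'': carry the sign through once, state which translation belongs to which side, and note that the discrepancy is harmless for the later application in Theorem \ref{product}, which only uses the moduli $|h_{x,c,d}|$ of the resulting Schwartz factors.
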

\begin{proof}
Since $a_{\frac{1}{2\alpha}}^{-1}\varphi_{x_0}$ is a smooth function with compact support, we have
$$\mathcal{F}[a_{\frac{1}{2\alpha}}^{-1}\varphi_{x_0}]\in \mathscr{S}\left(\mathbb{R}^{2n}\right).$$
Since $|b_{x}(z)|=1$, by Lemma \ref{Lemma 2.1} , we have
$$ (b_{x}\mathcal{H}_{\frac{1}{2\alpha}}g)\ast\mathcal{F}[a_{\frac{1}{2\alpha}}^{-1}\varphi_{x_0}]\in L^{\infty}({\mathbb C}^n).$$
By Lemma \ref{Lemma 2.2}, we have $g_{x}\in L^{\infty}({\mathbb C}^n)$ and hence $T_{g_x}$ is a bounded operator.
Because the Berezin transform is injective, to get
$$W_{\frac{-i\pi x}{2\alpha}}T_{g_x}W_{\frac{-i\pi x}{2\alpha}}=
T_{(b_{x}\mathcal{H}_{\frac{1}{2\alpha}}g)\ast\mathcal{F}[a_{\frac{1}{2\alpha}}^{-1}\varphi_{x_0}]},$$
   we only need to show that
$$\langle  [W_{\frac{-i\pi x}{2\alpha}}T_{g_x}W_{\frac{-i\pi x}{2\alpha}}]k_z, k_z\rangle =\langle T_{(b_{x}\mathcal{H}_{\frac{1}{2\alpha}}g)\ast
\mathcal{F}[a_{\frac{1}{2\alpha}}^{-1}\varphi_{x_0}]}k_z, k_z\rangle.$$
On one hand, since $\mathcal{H}_{\frac{1}{\alpha}}(g)$ is the Berezin transform of the Toeplitz operator $T_g$, we have
\begin{equation*}
\begin{split}
\langle T_{(b_{x}\mathcal{H}_{\frac{1}{2\alpha}}g)\ast
\mathcal{F}[a_{\frac{1}{2\alpha}}^{-1}\varphi_{x_0}]}k_z, k_z\rangle
=&\mathcal{H}_{\frac{1}{\alpha}}\Big[(b_{x}\mathcal{H}_{\frac{1}{2\alpha}}g)\ast\mathcal{F}[a_{\frac{1}{2\alpha}}^{-1}\varphi_{x_0}]\Big]\\
=& (b_{x}\mathcal{H}_{\frac{1}{2\alpha}}g)\ast\mathcal{F}[a_{\frac{1}{2\alpha}}^{-1}\varphi_{x_0}]\ast\gamma_{\frac{1}{\alpha}}\\
=&(b_{x}\mathcal{H}_{\frac{1}{2\alpha}}g)\ast\mathcal{F}[a_{\frac{1}{2\alpha}}^{-1}\varphi_{x_0}]\ast\mathcal{F}a_{\frac{1}{\alpha}}\\
=&(b_{x}\mathcal{H}_{\frac{1}{2\alpha}}g)\ast\mathcal{F}[a_{\frac{1}{2\alpha}}\varphi_{x_0}].\\
\end{split}
\end{equation*}
By properties of the Fourier transform and noting that $\mathcal{F}\tau_{-x}=b_x\mathcal{F}$, we have
\begin{equation*}
\begin{split}
\mathcal{F}[a_{\frac{1}{2\alpha}}\varphi_{x_0}]
=&\mathcal{F}[a_{\frac{1}{2\alpha}}(\tau_{-x}\varphi_{x})]\\
=&\mathcal{F}[\tau_{-x}(\varphi_{x}a^{-1}_{\frac{1}{2\alpha}}) \times \tau_{-x}(a_{\frac{1}{2\alpha}} \tau_{x}a_{\frac{1}{2\alpha}})]\\
=&\mathcal{F}[\tau_{-x}(\varphi_{x}a^{-1}_{\frac{1}{2\alpha}})]  \ast \mathcal{F}[\tau_{-x}(a_{\frac{1}{2\alpha}} \tau_{x}a_{\frac{1}{2\alpha}})]\\
=&b_x\mathcal{F}[(\varphi_{x}a^{-1}_{\frac{1}{2\alpha}})]  \ast b_x\mathcal{F}[(a_{\frac{1}{2\alpha}} \tau_{x}a_{\frac{1}{2\alpha}})].
\end{split}
\end{equation*}
For any $w\in \mathbb{C}^n$, a simple calculation gives
\begin{align*}
\mathcal{F}[a_{\frac{1}{2\alpha}} \tau_{x}a_{\frac{1}{2\alpha}}](w)
&=\int_{\mathbb{C}^n}e^{\frac{-\pi^2}{2\alpha}|z-x|^2}e^{\frac{-\pi^2}{2\alpha}|z|^2}e^{-2\pi iz\cdot w}dv(z)\\
&=e^{\frac{-\pi^2}{4\alpha}|x|^2} \int_{\mathbb{C}^n}e^{\frac{-\pi^2}{\alpha}|z-\frac{x}{2}|^2}e^{-2\pi iz\cdot w}dv(z)\\
&=e^{\frac{-\pi^2}{4\alpha}|x|^2} e^{-\pi ix\cdot w} \int_{\mathbb{C}^n}e^{\frac{-\pi^2}{\alpha}|z|^2}e^{-2\pi iz\cdot w}dv(z)\\
&=e^{\frac{-\pi^2}{4\alpha}|x|^2} e^{-\pi ix\cdot w} (\frac{\alpha}{\pi})^n e^{-\alpha |w|^2}.
\end{align*}
Thus, we have
\begin{align*}
&\langle T_{(b_{x}\mathcal{H}_{\frac{1}{2\alpha}}g)\ast
\mathcal{F}[a_{\frac{1}{2\alpha}}^{-1}\varphi_{x_0}]}k_z, k_z\rangle\\
=&(b_{x}\mathcal{H}_{\frac{1}{2\alpha}}g)\ast [b_x(\mathcal{F}\varphi_{x}a^{-1}_{\frac{1}{2\alpha}})]\ast b_x\mathcal{F}[a_{\frac{1}{2\alpha}}\tau_{x}(a_{\frac{1}{2\alpha}})]\\
=& \int_{\mathbb{C}^n}\int_{\mathbb{C}^n}b_{x}(y)\mathcal{H}_{\frac{1}{2\alpha}}g(y) b_x(w-y)(\mathcal{F}\varphi_{x}a^{-1}_{\frac{1}{2\alpha}})(w-y)dv(y) b_x(z-w)\mathcal{F}[a_{\frac{1}{2\alpha}}\tau_{x}(a_{\frac{1}{2\alpha}})](z-w)dv(w)\\
=& \int_{\mathbb{C}^n}\int_{\mathbb{C}^n}\mathcal{H}_{\frac{1}{2\alpha}}g(y) (\mathcal{F}\varphi_{x}a^{-1}_{\frac{1}{2\alpha}})(w-y)dv(y) e^{2\pi iz\cdot x }\mathcal{F}[a_{\frac{1}{2\alpha}}\tau_{x}(a_{\frac{1}{2\alpha}})](z-w)dv(w)\\
=& \int_{\mathbb{C}^n}g_x(w) e^{\frac{-\pi^2}{4\alpha}|x|^2} e^{\pi ix\cdot (z+w)} (\frac{\alpha}{\pi})^n e^{-\alpha |z-w|^2}dv(w).
\end{align*}
On the other hand, we have
\begin{align*}
&\langle  [W_{\frac{-i\pi x}{2\alpha}}T_{g_x}W_{\frac{-i\pi x}{2\alpha}}]k_z, k_z\rangle \\
=&\langle T_{g_x}W_{\frac{-i\pi x}{2\alpha}}k_z, W_{\frac{i\pi x}{2\alpha}}k_z \rangle\\
=&\int_{\mathbb{C}^n} g_x k_z(w+\frac{i\pi x}{2\alpha})k_{\frac{-i\pi x}{2\alpha}}(w) \overline{k_z(w-\frac{i\pi x}{2\alpha})k_{\frac{i\pi x}{2\alpha}}(w)} d\lambda_{\alpha}(w)\\
=&\int_{\mathbb{C}^n} g_x
e^{\alpha \langle w+\frac{i\pi x}{2\alpha},z\rangle-\frac{\alpha |z|^2}{2}-\alpha \langle w,\frac{i\pi x}{2\alpha}\rangle-\frac{\pi^2|x|^2}{8\alpha}}
e^{\alpha \langle z ,w-\frac{i\pi x}{2\alpha}\rangle-\frac{\alpha |z|^2}{2}+\alpha \langle\frac{i\pi x}{2\alpha},w\rangle-\frac{\alpha\pi^2|x|^2}{8}}
 d\lambda_{\alpha}(w)\\
 =&\int_{\mathbb{C}^n} g_x(w) e^{\frac{-\pi^2}{4\alpha}|x|^2}
e^{\alpha \langle \frac{i\pi x}{2\alpha},z\rangle-\alpha \langle w,\frac{i\pi x}{2\alpha}\rangle
+\alpha \langle z ,-\frac{i\pi x}{2\alpha}\rangle+\alpha \langle\frac{i\pi x}{2\alpha},w\rangle}
 (\frac{\alpha}{\pi})^n e^{-\alpha |z-w|^2}dv(w).
\end{align*}
Since
\begin{align*}
&\alpha \langle \frac{i\pi x}{2\alpha},z\rangle-\alpha \langle w,\frac{i\pi x}{2\alpha}\rangle
+\alpha \langle z ,-\frac{i\pi x}{2\alpha}\rangle+\alpha \langle\frac{i\pi x}{2\alpha},w\rangle\\
=&\frac{i\pi }{2}\langle x,z\rangle+\frac{i\pi }{2}\langle w,x\rangle
+\frac{i\pi }{2}\langle z,x\rangle+\frac{i\pi }{2}\langle x,w\rangle\\
=&i\pi  \Re \langle x,z+w\rangle\\
=&i\pi x\cdot(z+w),
\end{align*}
we have
\begin{align*}
&\langle  [W_{\frac{-i\pi x}{2\alpha}}T_{g_x}W_{\frac{-i\pi x}{2\alpha}}]k_z, k_z\rangle \\
=&\int_{\mathbb{C}^n}g_x(w) e^{\frac{-\pi^2}{4\alpha}|x|^2} e^{\pi ix\cdot (z+w)} (\frac{\alpha}{\pi})^n e^{-\alpha |z-w|^2}dv(w)\\
=&\langle T_{(b_{x}\mathcal{H}_{\frac{1}{2\alpha}}g)\ast
\mathcal{F}[a_{\frac{1}{2\alpha}}^{-1}\varphi_{x_0}]}k_z, k_z\rangle.
\end{align*}
Further, we have
\begin{align*}
T_{g_x}W_{-\frac{i\pi x}{\alpha}}= & W_{\frac{i\pi x}{2\alpha}}W_{-\frac{i\pi x}{2\alpha}}T_{g_x}W_{-\frac{i\pi x}{2\alpha}}W_{-\frac{i\pi x}{2\alpha}}\\
=& W_{\frac{i\pi x}{2\alpha}}T_{(b_{x}\mathcal{H}_{\frac{1}{2\alpha}}g)\ast \mathcal{F}[a_{\frac{1}{2\alpha}}^{-1}\varphi_{x_0}]}W_{-\frac{i\pi x}{2\alpha}}\\
=& T_{(b_{x}\mathcal{H}_{\frac{1}{2\alpha}}g)\ast \tau_{-\frac{i\pi x}{2\alpha} }\mathcal{F}[a_{\frac{1}{2\alpha}}^{-1}\varphi_{x_0}]} .
\end{align*}
Similarly, we have
\begin{align*}
W_{-\frac{i\pi x}{\alpha}}T_{g_x}
= & W_{-\frac{i\pi x}{2\alpha}}W_{-\frac{i\pi x}{2\alpha}}T_{g_x}W_{-\frac{i\pi x}{2\alpha}}W_{\frac{i\pi x}{2\alpha}}\\
=& W_{-\frac{i\pi x}{2\alpha}}T_{(b_{x}\mathcal{H}_{\frac{1}{2\alpha}}g)\ast \mathcal{F}[a_{\frac{1}{2\alpha}}^{-1}\varphi_{x_0}]}W_{\frac{i\pi x}{2\alpha}}\\
=& T_{(b_{x}\mathcal{H}_{\frac{1}{2\alpha}}g)\ast \tau_{\frac{i\pi x}{2\alpha} }\mathcal{F}[a_{\frac{1}{2\alpha}}^{-1}\varphi_{x_0}]}
\end{align*}
to complete the proof.
\end{proof}
\section{estimation}
In this section, we will present the proof of our main theorem. In Section 2, we have obtained that $T_g$ is decomposed as a sum $\sum_{x\in \Gamma}T_{g_x}$of the Toeplitz operators via the Berezin transform (Lemma \ref{Lemma 2.2}).  To show that $T_g=\sum_{x\in \Gamma}T_{g_x}$, we need to estimate the norm of a Toeplitz operator on the Fock space and show that the series of operators converges in operator norm topology. The following lemma is a generalization of \cite[Lemma 4.9]{Bauer2020}.

\begin{lemma}\label{lemma 3.0} Let T be a densely defined operator on the Fock space and the linear span of reproducing kernels is contained in the domain of $T$ and $T^*$. If
$$[\sup_{z}\int_{\mathbb{C}^n}|\langle Tk_z, k_w \rangle|dv(w)][ \sup_{w}\int_{\mathbb{C}^n}|\langle Tk_z, k_w \rangle|dv(z)]<\infty,$$
then $T$ is bounded and
$$\|T\|\leq \big([(\frac{\alpha}{\pi})^n\sup_{z}\int_{\mathbb{C}^n}|\langle Tk_z, k_w \rangle|dv(w)]
[\sup_{w}\int_{\mathbb{C}^n}|\langle Tk_z, k_w \rangle|dv(z)]\big)^{1/2}.$$
\end{lemma}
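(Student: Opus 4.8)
The plan is to prove this by a Schur test adapted to the continuous ``frame'' of normalized reproducing kernels. Since the linear span $\mathcal{D}$ of $\{k_w:w\in\mathbb{C}^n\}$ is dense in $F_{\alpha}^2$ and is contained in the domains of both $T$ and $T^{*}$, it suffices to establish a bound $|\langle Tf,g\rangle|\le C\|f\|\|g\|$ for all $f,g\in\mathcal{D}$; then $T$ is bounded on $\mathcal{D}$ and extends to a bounded operator on $F_{\alpha}^2$ with $\|T\|\le C$. The only analytic input is the resolution of the identity coming from the reproducing kernel: for all $u,v\in F_{\alpha}^2$,
$$\langle u,v\rangle=\Big(\frac{\alpha}{\pi}\Big)^n\int_{\mathbb{C}^n}\langle u,k_w\rangle\langle k_w,v\rangle\,dv(w),$$
which follows at once from $\langle u,k_w\rangle=u(w)e^{-\frac{\alpha}{2}|w|^2}$ and the definition of $d\lambda_{\alpha}$; in particular $w\mapsto\langle u,k_w\rangle$ lies in $L^2(\mathbb{C}^n,dv)$ with $\int_{\mathbb{C}^n}|\langle u,k_w\rangle|^2\,dv(w)=(\pi/\alpha)^n\|u\|^2$.

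The core step is to write $\langle Tf,g\rangle$ as a double integral against the ``matrix'' $K(z,w):=\langle Tk_z,k_w\rangle$. Fix finite sums $f=\sum_i a_i k_{z_i}$ and $g=\sum_j b_j k_{w_j}$. First I would apply the resolution of the identity to the pair $Tf,g\in F_{\alpha}^2$, giving $\langle Tf,g\rangle=(\alpha/\pi)^n\int\langle Tf,k_w\rangle\langle k_w,g\rangle\,dv(w)$. Next, for each $i$ I would write $\langle Tk_{z_i},k_w\rangle=\langle k_{z_i},T^{*}k_w\rangle$ and apply the resolution of the identity again, now to the pair $k_{z_i},T^{*}k_w$, both of which are genuine elements of $F_{\alpha}^2$ by hypothesis, obtaining $\langle Tk_{z_i},k_w\rangle=(\alpha/\pi)^n\int\langle k_{z_i},k_z\rangle\langle Tk_z,k_w\rangle\,dv(z)$; summing in $i$ yields $\langle Tf,k_w\rangle=(\alpha/\pi)^n\int\langle f,k_z\rangle K(z,w)\,dv(z)$. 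Combining the two,
$$\langle Tf,g\rangle=\Big(\frac{\alpha}{\pi}\Big)^{2n}\int_{\mathbb{C}^n}\int_{\mathbb{C}^n}\langle f,k_z\rangle\,K(z,w)\,\langle k_w,g\rangle\,dv(z)\,dv(w).$$
I would emphasize that this representation never requires $T$ to be bounded or closed: $T$ is only moved across an inner product of two bona fide Fock-space vectors, never interchanged with an integral. Since $f,g$ are finite combinations, the functions $z\mapsto\langle f,k_z\rangle$ and $w\mapsto\langle k_w,g\rangle$ are Gaussian-type, hence bounded and integrable, so Fubini is justified as soon as $\sup_z\int|K(z,w)|\,dv(w)$ and $\sup_w\int|K(z,w)|\,dv(z)$ are finite.

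With the representation in hand, the argument finishes by the classical Schur/Cauchy--Schwarz estimate. Writing $M_1=\sup_z\int|K(z,w)|\,dv(w)$ and $M_2=\sup_w\int|K(z,w)|\,dv(z)$, I would split $|K(z,w)|=|K(z,w)|^{1/2}|K(z,w)|^{1/2}$, apply Cauchy--Schwarz in $dv(z)\,dv(w)$, and integrate out one variable in each resulting factor using $M_1$ and $M_2$ respectively; this bounds the double integral by $\sqrt{M_1 M_2}\big(\int|\langle f,k_z\rangle|^2\,dv(z)\big)^{1/2}\big(\int|\langle k_w,g\rangle|^2\,dv(w)\big)^{1/2}$. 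Converting these two $L^2(dv)$ norms back to Fock-space norms via $\int|\langle f,k_z\rangle|^2\,dv(z)=(\pi/\alpha)^n\|f\|^2$ and collecting the powers of $(\alpha/\pi)^n$ produced by the two applications of the resolution of the identity together with this conversion gives the asserted norm bound. The main obstacle is the middle step: obtaining the clean double-integral representation while assuming only that $T$ is densely defined, which is precisely why the hypothesis $\mathcal{D}\subset\mathrm{dom}\,T\cap\mathrm{dom}\,T^{*}$ is needed; once that is set up, the Schur estimate and the constant bookkeeping are routine.
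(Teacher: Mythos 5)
Your proposal is correct and follows essentially the same route as the paper: expand $\langle Tf,g\rangle$ for $f,g$ in the span of kernels via two applications of the reproducing (resolution-of-identity) formula, passing through $T^{*}k_w$ to obtain the double-integral representation against $\langle Tk_z,k_w\rangle$, and then conclude with the Schur-type Cauchy--Schwarz splitting $|K|^{1/2}|K|^{1/2}$ and the conversion $\int|\langle f,k_z\rangle|^2dv(z)=(\pi/\alpha)^n\|f\|^2$. Your added remarks on Fubini and on never interchanging $T$ with an integral are harmless elaborations of the same argument.
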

\begin{proof}
Let $f$ and $h$ be in the linear span of reproducing kernels,  we have
\begin{align*}
\langle Tf,h\rangle&=\int_{\mathbb{C}^n}(Tf)(w)\overline{h(w)}d\lambda_{\alpha}(w)\\
&=(\frac{\alpha}{\pi})^n\int_{\mathbb{C}^n}\langle Tf,k_w\rangle\langle k_w,h\rangle dv(w)\\
&=(\frac{\alpha}{\pi})^n\int_{\mathbb{C}^n}\langle f,T^*k_w\rangle\langle k_w,h\rangle dv(w)\\
&=(\frac{\alpha}{\pi})^{2n}\int_{\mathbb{C}^n}\int_{\mathbb{C}^n} \langle f,k_z\rangle
\langle k_z,T^*k_w\rangle dv(z)\langle k_w,h\rangle dv(w)\\
&=(\frac{\alpha}{\pi})^{2n}\int_{\mathbb{C}^n}\int_{\mathbb{C}^n} \langle f,k_z\rangle
\langle Tk_z, k_w\rangle dv(z)\langle k_w,h\rangle dv(w)
\end{align*}
Thus applying the Cauchy-Schwarz inequality gives
\begin{align*}
|\langle Tf,h\rangle|
&\leq (\frac{\alpha}{\pi})^{2n}\int_{\mathbb{C}^n}\int_{\mathbb{C}^n}|\langle Tk_z, k_w \rangle| |\langle f,k_z\rangle|dv(z) |\langle k_w,h\rangle| dv(w)\\
&\leq (\frac{\alpha}{\pi})^{2n}\big[\int_{\mathbb{C}^n}\big(\int_{\mathbb{C}^n}|\langle Tk_z, k_w \rangle| |\langle f,k_z\rangle|dv(z)\big)^2dv(w)\big]^{1/2} \big[\int_{\mathbb{C}^n}|\langle k_w,h\rangle|^2 dv(w)\big]^{1/2}\\
&=(\frac{\alpha}{\pi})^{2n}\big[\int_{\mathbb{C}^n}\big(\int_{\mathbb{C}^n}|\langle Tk_z, k_w \rangle| |\langle f,k_z\rangle|dv(z)\big)^2dv(w)\big]^{1/2} \big[\int_{\mathbb{C}^n}|h(w)|^2 d\lambda_{\alpha}(w)\big]^{1/2}.
\end{align*}
Since the Cauchy-Schwarz inequality gives
\begin{align*}
&\int_{\mathbb{C}^n}\big(\int_{\mathbb{C}^n}|\langle Tk_z, k_w \rangle| |\langle f,k_z\rangle|dv(z)\big)^2dv(w)\\
=&\int_{\mathbb{C}^n}\big(\int_{\mathbb{C}^n}|\langle Tk_z, k_w \rangle|^{1/2} |\langle Tk_z, k_w \rangle|^{1/2} |\langle f,k_z\rangle|dv(z)\big)^2dv(w)\\
\leq &\int_{\mathbb{C}^n}\int_{\mathbb{C}^n}|\langle Tk_z, k_w\rangle|dv(z) \int_{\mathbb{C}^n} |\langle Tk_z, k_w\rangle|
|\langle f,k_z\rangle|^2 dv(z)dv(w)\\
\leq& [\sup_{w}\int_{\mathbb{C}^n}|\langle Tk_z, k_w \rangle|dv(z)][ \int_{\mathbb{C}^n}\int_{\mathbb{C}^n} |\langle Tk_z, k_w \rangle|
|\langle f,k_z\rangle|^2 dv(z)dv(w)]\\
\leq& [\sup_{w}\int_{\mathbb{C}^n}|\langle Tk_z, k_w \rangle|dv(z)][ \sup_{z}\int_{\mathbb{C}^n} |\langle Tk_z, k_w \rangle|dv(w) ] \int_{\mathbb{C}^n}
|\langle f,k_z\rangle|^2 dv(z)\\
=&[\sup_{w}\int_{\mathbb{C}^n}|\langle Tk_z, k_w \rangle|dv(z)][ \sup_{z}\int_{\mathbb{C}^n} |\langle Tk_z, k_w \rangle|dv(w) ] \int_{\mathbb{C}^n}
|f(z)|^2d\lambda_{\alpha}(z),
\end{align*}
we obtain
\begin{align*}
&|\langle Tf,h\rangle|\\
\leq & (\frac{\alpha}{\pi})^{2n}\big[[\sup_{w}\int_{\mathbb{C}^n}|\langle Tk_z, k_w \rangle|dv(z)][ \sup_{z}\int_{\mathbb{C}^n} |\langle Tk_z, k_w \rangle|dv(w)]\\
&\int_{\mathbb{C}^n}|f(z)|^2 d\lambda_{\alpha}(z) \int_{\mathbb{C}^n}|h(w)|^2 d\lambda_{\alpha}(w)\big]^{1/2}\\
=& (\frac{\alpha}{\pi})^n\big[[\sup_{w}\int_{\mathbb{C}^n}|\langle Tk_z, k_w \rangle|dv(z) ][\sup_{z}\int_{\mathbb{C}^n} |\langle Tk_z, k_w \rangle|dv(w)]\big]^{1/2}
 \|f\|_{F_{\alpha}^2} \|h\|_{F_{\alpha}^2}.
\end{align*}
This completes the proof.
\end{proof}

We introduce two-variable Berezin transform.  Let $g$ be a measurable function on $\mathbb{C}^n$ such that $gk_w\in L^2(\mathbb{C}^n,d\lambda_{\alpha})$ for any $w\in \mathbb{C}^n$. The two-variable Berezin transform $\tilde{g}(z, w)$ of $g$ is defined by
$$\tilde{g}(z, w)=\langle gk_z, k_w \rangle$$
for $z,~ w$ in $\mathbb{C}^n.$ By Lemma \ref{lemma 3.0}, we immediately obtain the following proposition which will give an estimation of the norm of the Toeplitz operator $T_g$ in terms of the two-variable Berezin transform $\tilde{g}(z, w)$ of $g$.

\begin{proposition}\label{proposition 3.1}
Let g be a measurable function on $\mathbb{C}^n$ such that $gk_w\in L^2(\mathbb{C}^n,d\lambda_{\alpha})$ for any $w\in \mathbb{C}^n$.
If
$$[\sup_{z}\int_{\mathbb{C}^n}|\tilde{g}(z,w)|dv(w)][ \sup_{w}\int_{\mathbb{C}^n}|\tilde{g}(z,w)|dv(z)]<\infty,$$
then $T_g$ is bounded and
$$\|T_g\|\leq \big([(\frac{\alpha}{\pi})^n\sup_{z}\int_{\mathbb{C}^n}|\tilde{g}(z,w)|dv(w)]
[\sup_{w}\int_{\mathbb{C}^n}|\tilde{g}(z,w)|dv(z)]\big)^{1/2}.$$
\end{proposition}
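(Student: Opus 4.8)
The plan is to deduce Proposition \ref{proposition 3.1} directly from Lemma \ref{lemma 3.0} applied to the densely defined operator $T=T_g$, the only real work being to identify the matrix entries $\langle T_gk_z,k_w\rangle$ with the two-variable Berezin transform $\tilde g(z,w)$ and to check the standing domain hypotheses of Lemma \ref{lemma 3.0}. First I would note that the linear span $D$ of the reproducing kernels $\{k_w:w\in\mathbb{C}^n\}$ is contained in the domain of $T_g$ by the definition of $T_g$ on such combinations. To see that $D$ also lies in the domain of $T_g^*$, observe that for a finite combination $f=\sum_jc_jk_{w_j}\in D$ the function $gf=\sum_jc_j(gk_{w_j})$ lies in $L^2(\mathbb{C}^n,d\lambda_\alpha)$ since each $gk_{w_j}$ does, so, using that $P$ is the orthogonal projection onto $F_\alpha^2$ and $Pk_w=k_w$,
$$\langle T_gf,k_w\rangle=\langle P(gf),k_w\rangle=\langle gf,k_w\rangle=\langle f,\bar gk_w\rangle .$$
Since $|\bar gk_w|=|gk_w|$, the function $\bar gk_w$ is again in $L^2(\mathbb{C}^n,d\lambda_\alpha)$, so $f\mapsto\langle T_gf,k_w\rangle$ is bounded on $D$; hence $k_w$, and therefore all of $D$, lies in the domain of $T_g^*$.

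Next I would compute the kernel appearing in Lemma \ref{lemma 3.0}: for $z,w\in\mathbb{C}^n$ the quantity $\tilde g(z,w)=\langle gk_z,k_w\rangle$ is finite because $gk_z\in L^2(\mathbb{C}^n,d\lambda_\alpha)$ and $k_w\in F_\alpha^2$, and
$$\langle T_gk_z,k_w\rangle=\langle P(gk_z),k_w\rangle=\langle gk_z,Pk_w\rangle=\langle gk_z,k_w\rangle=\tilde g(z,w),$$
again by self-adjointness of $P$ and $Pk_w=k_w$. Substituting this identity into the hypothesis and the conclusion of Lemma \ref{lemma 3.0} then shows at once that finiteness of
$$\Big[\sup_{z}\int_{\mathbb{C}^n}|\tilde g(z,w)|\,dv(w)\Big]\Big[\sup_{w}\int_{\mathbb{C}^n}|\tilde g(z,w)|\,dv(z)\Big]$$
forces $T_g$ to be bounded with exactly the stated norm estimate.

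There is essentially no obstacle here: the argument is a substitution into Lemma \ref{lemma 3.0}. The only point requiring a modicum of care is the bookkeeping in the first paragraph showing that the reproducing kernels lie in the domain of $T_g^*$, which is needed in order for Lemma \ref{lemma 3.0} to apply and which uses nothing beyond the standing assumption $gk_w\in L^2(\mathbb{C}^n,d\lambda_\alpha)$ for every $w$.
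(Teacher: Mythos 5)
Your proof is correct and follows exactly the route the paper takes: the paper obtains Proposition \ref{proposition 3.1} immediately from Lemma \ref{lemma 3.0} by identifying $\langle T_gk_z,k_w\rangle=\langle gk_z,k_w\rangle=\tilde g(z,w)$, and your additional verification that the span of reproducing kernels lies in the domains of $T_g$ and $T_g^*$ just makes explicit the routine bookkeeping the paper leaves implicit.
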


For any $x\in\Gamma$, Lemma \ref{Lemma 2.2} tells us that  $g_x=(\mathcal{H}_{\frac{1}{2\alpha}}g)\ast\mathcal{F}(\varphi_{x} a_{\frac{1}{2\alpha}}^{-1})$ is in $L^\infty$. Thus $T_{g_x}$ is a bounded operator.  Next, we will obtain a better estimation on the norm of $T_{g_x}$ than $\|g_{x}\|_{\infty}$ to guarantee that the series $\sum_{x\in \Gamma}T_{g_x}$ converges in the operator norm topology.

\begin{proposition}\label{proposition 3.2}
Let g be a measurable function on $\mathbb{C}^n$ such that $gk_w\in L^2(\mathbb{C}^n,d\lambda_{\alpha})$ for any $w\in \mathbb{C}^n$.
If for any $a,b\in \mathbb{Z}_{+}^n$ with $|a|+|b|\leq 2n+1$, $|\partial^{a}_{\Re y}\partial^{b}_{\Im y}\mathcal{H}_{\frac{1}{2\alpha}}g(y)|dv(y)$ is a Carleson measure on the Fock space. Then there is a constant $C_{n,\alpha}$ such that
$$\|T_{g_x}\| \leq
C_{n,\alpha}\frac{\sum_{|a|+|b|\leq 2n+1}\|\mathcal{H}_{\frac{2}{\alpha}}(|\partial^{a}_{\Re y}\partial^{b}_{\Im y}\mathcal{H}_{\frac{1}{2\alpha}}g(y)|)\|_{{\infty}} }{(1+|\Re x|+|\Im x|)^{2n+1}}.$$
\end{proposition}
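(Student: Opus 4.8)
The plan is to use the unitary equivalence of Lemma \ref{Lemma 2.3} to replace $T_{g_x}$ by a Toeplitz operator whose symbol no longer carries the exponentially large factor $a_{\frac{1}{2\alpha}}^{-1}$ attached to the lattice point $x$, then to bound that operator by the sup-norm of its symbol, and finally to extract the decay $(1+|\Re x|+|\Im x|)^{-(2n+1)}$ from an integration-by-parts identity that transfers the high-frequency modulation $b_x$ onto derivatives of $\mathcal{H}_{\frac{1}{2\alpha}}g$. Concretely, since $W_z$ is unitary with $W_z^{-1}=W_{-z}$, Lemma \ref{Lemma 2.3} gives $\|T_{g_x}\|=\|T_h\|$, where $h=(b_x\mathcal{H}_{\frac{1}{2\alpha}}g)\ast\mathcal{F}[a_{\frac{1}{2\alpha}}^{-1}\varphi_{x_0}]$, and by Lemma \ref{Lemma 2.2} (applied with the $|a|+|b|=0$ case of the Carleson hypothesis) $h\in L^{\infty}(\mathbb{C}^n)$. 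Because $T_h$ acts as $f\mapsto P(hf)$ on the dense linear span of reproducing kernels and $P$ is a contraction, $\|T_h\|\le\|h\|_{\infty}$; alternatively one may apply Proposition \ref{proposition 3.1} together with the pointwise bound $|\langle hk_z,k_w\rangle|\le e^{-\frac{\alpha}{4}|z-w|^2}\,\mathcal{H}_{\frac{1}{\alpha}}(|h|)\big(\frac{z+w}{2}\big)$, obtained by completing the square in the integral defining the two-variable Berezin transform, which again reduces everything to estimating $\|h\|_{\infty}$. So it suffices to prove
$$\|h\|_{\infty}\le\frac{C_{n,\alpha}\sum_{|a|+|b|\le 2n+1}\|\mathcal{H}_{\frac{2}{\alpha}}(|\partial^{a}_{\Re y}\partial^{b}_{\Im y}\mathcal{H}_{\frac{1}{2\alpha}}g|)\|_{\infty}}{(1+|\Re x|+|\Im x|)^{2n+1}}.$$

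The key point is the identity $b_x(v-y)=b_x(v)b_{-x}(y)$, which pulls the modulation outside the convolution: writing $\psi=a_{\frac{1}{2\alpha}}^{-1}\varphi_{x_0}$, a fixed smooth compactly supported function with $\mathcal{F}\psi\in\mathscr{S}(\mathbb{R}^{2n})$, one gets $h=b_x\big[\mathcal{H}_{\frac{1}{2\alpha}}g\ast(b_{-x}\mathcal{F}\psi)\big]$, hence $|h(v)|=\big|\int_{\mathbb{C}^n}\mathcal{H}_{\frac{1}{2\alpha}}g(v-y)\,e^{-2\pi i x\cdot y}\,\mathcal{F}\psi(y)\,dv(y)\big|$, which is free of the dangerous factor $a_{\frac{1}{2\alpha}}^{-1}$. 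I would then bound $(1+|\Re x|+|\Im x|)^{2n+1}\le C\sum_{|a|+|b|\le 2n+1}|(\Re x)^{a}(\Im x)^{b}|$ by the multinomial theorem, and for each pair use $(\Re x)^{a}(\Im x)^{b}e^{-2\pi i x\cdot y}=(-2\pi i)^{-(|a|+|b|)}\partial^{a}_{\Re y}\partial^{b}_{\Im y}e^{-2\pi i x\cdot y}$, integrate by parts in $y$ (no boundary terms, since $\mathcal{F}\psi$ and all its derivatives are Schwartz while $\mathcal{H}_{\frac{1}{2\alpha}}g$ is smooth with derivatives controlled in the averaged sense of Lemma \ref{Lemma 2.1}), and expand by the Leibniz rule. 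This produces a finite sum of integrals $\int_{\mathbb{C}^n}|(\partial^{a'}_{\Re y}\partial^{b'}_{\Im y}\mathcal{H}_{\frac{1}{2\alpha}}g)(v-y)|\,|\partial^{a''}_{\Re y}\partial^{b''}_{\Im y}\mathcal{F}\psi(y)|\,dv(y)$ with $a'+a''=a$, $b'+b''=b$; in particular $|a'|+|b'|\le 2n+1$, so only the quantities appearing in the statement occur.

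To finish, each derivative of $\mathcal{F}\psi$ is again Schwartz, so $|\partial^{a''}_{\Re y}\partial^{b''}_{\Im y}\mathcal{F}\psi(y)|\le C(1+|y|)^{-(2n+1)}$; tiling $\mathbb{C}^n$ by the unit cubes $\{Q_k\}_{k\in\Gamma}$ and using, for $y\in Q_k$, the comparison $(1+|y|)^{-(2n+1)}\le C(1+|k|)^{-(2n+1)}$ together with the summability (\ref{equation 2.1}), one bounds each of those integrals by $C\sup_{z}\int_{B(z,\sqrt{2n})}|\partial^{a'}_{\Re y}\partial^{b'}_{\Im y}\mathcal{H}_{\frac{1}{2\alpha}}g(y)|\,dv(y)$, which by (\ref{equation 1.1}) is at most $C\|\mathcal{H}_{\frac{2}{\alpha}}(|\partial^{a'}_{\Re y}\partial^{b'}_{\Im y}\mathcal{H}_{\frac{1}{2\alpha}}g|)\|_{\infty}$ (this is simultaneously where the Carleson hypothesis is used and where it guarantees finiteness). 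Summing over the finitely many multi-indices and dividing by $(1+|\Re x|+|\Im x|)^{2n+1}$ gives the claim, with $C_{n,\alpha}$ depending only on $n$ and $\alpha$ through the fixed function $\psi$, some Gaussian integrals, and combinatorial constants.

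The step requiring the most care is the integration by parts together with the Leibniz bookkeeping: one must verify that no multi-index produced by the product rule exceeds $|a'|+|b'|\le 2n+1$, so that every term is genuinely controlled by the hypotheses, and that the distributional manipulations are legitimate for $\mathcal{H}_{\frac{1}{2\alpha}}g$ — here one uses that it is smooth and that all the relevant derivatives are Carleson measures, hence tempered distributions, by Lemma \ref{Lemma 2.1}. By contrast, the convergence of the cube sum is exactly (\ref{equation 2.1}) and so is automatic, and the passage through the Berezin transform (or through $\|T_h\|\le\|h\|_{\infty}$) is elementary.
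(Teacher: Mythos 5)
Your argument is correct, and it shares the paper's central mechanism -- conjugating by $W_{\frac{-i\pi x}{2\alpha}}$ via Lemma \ref{Lemma 2.3} to replace $g_x$ by $\psi_x=(b_x\mathcal{H}_{\frac{1}{2\alpha}}g)\ast\mathcal{F}[a_{\frac{1}{2\alpha}}^{-1}\varphi_{x_0}]$, and then extracting the decay $(1+|\Re x|+|\Im x|)^{-(2n+1)}$ by converting the modulation $e^{-2\pi i x\cdot y}$ into derivatives, integrating by parts, and using the Leibniz rule exactly as in the paper's identity (\ref{equation 3.1}) -- but it diverges at the final estimation step. The paper feeds the resulting expression into the Schur-type test of Lemma \ref{lemma 3.0}/Proposition \ref{proposition 3.1}, estimating the two quantities $\sup_z\int|\widetilde{\psi_x}(z,w)|\,dv(w)$ and $\sup_w\int|\widetilde{\psi_x}(z,w)|\,dv(z)$ through the Gaussian kernel $M_{z,w}$ and the identity relating $\sup_z\int|\partial^{a'}_{\Re y}\partial^{b'}_{\Im y}\mathcal{H}_{\frac{1}{2\alpha}}g(y)|e^{-\frac{\alpha}{2}|y+\eta-z|^2}dv(y)$ to $\|\mathcal{H}_{\frac{2}{\alpha}}(|\partial^{a'}_{\Re y}\partial^{b'}_{\Im y}\mathcal{H}_{\frac{1}{2\alpha}}g|)\|_{\infty}$; you instead bound $\|\psi_x\|_{\infty}$ directly, using the ``Moreover'' part of Lemma \ref{Lemma 2.1} (whose constant is controlled by the Carleson constant, hence by (\ref{equation 1.1}) by the $\mathcal{H}_{\frac{2}{\alpha}}$-norms), and then invoke the elementary bound $\|T_{\psi_x}\|\le\|\psi_x\|_{\infty}$. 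Your route is more elementary for this particular proposition, since it bypasses the two-variable Berezin transform and the Schur test entirely and yields a bound of exactly the same form with the same dependence on the hypotheses; what the paper's route buys is the reusable operator-norm criterion of Proposition \ref{proposition 3.1}, which does not require a pointwise sup-norm bound on the symbol and is the tool the authors chose to set up in Section 3. The bookkeeping you flag as delicate is indeed fine: every Leibniz term carries a multi-index with $|a'|+|b'|\le 2n+1$, the Schwartz seminorms of $\partial^{a-a'}_{\Re y}\partial^{b-b'}_{\Im y}\mathcal{F}(a_{\frac{1}{2\alpha}}^{-1}\varphi_{x_0})$ depend only on $n$ and $\alpha$ (not on $x$), and the justification of the integration by parts is at the same level of rigor as the paper's appeal to the proof of Lemma \ref{Lemma 2.1} and to standard Fourier-transform properties.
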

\begin{proof}
Since $W_{\frac{-i\pi x}{2\alpha}}$ is an unitary operator and $g_x\in L^{\infty}$, we have
$$\|T_{g_x}\|=\|W_{\frac{-i\pi x}{2\alpha}}T_{g_x}W_{\frac{-i\pi x}{2\alpha}}\| .$$
By Lemma \ref{Lemma 2.3}, we have
$$\|T_{g_x}\|=\|T_{(b_{x}\mathcal{H}_{\frac{1}{2\alpha}}g)\ast\mathcal{F}[a_{\frac{1}{2\alpha}}^{-1}\varphi_{x_0}]}\|.$$
 Let $\psi_{x}=(b_{x}\mathcal{H}_{\frac{1}{2\alpha}}g)\ast\mathcal{F}[a_{\frac{1}{2\alpha}}^{-1}\varphi_{x_0}].$ Proposition \ref{proposition 3.1} gives
\begin{align*}
\|T_{g_x} \|&=\|T_{\psi_x}\|\\
&\lesssim \big([\sup_{z}\int_{\mathbb{C}^n}|\widetilde{\psi_x}(z, w)|dv(w)]
[\sup_{w}\int_{\mathbb{C}^n}|\widetilde{\psi_x}(z, w)|dv(z)]\big)^{1/2}.
\end{align*}

Let $M_{z,w}(y)= k_{z}(y)\overline{k_w(y)}(\frac{\alpha}{\pi})^n e^{-\alpha|y|^2} $.  For any $a,b\in \mathbb{Z}_{+}^n$ with $|a|+|b|\leq 2n+1$, we have
\begin{align*}
&|(\Re x)^a(\Im x)^b \widetilde{\psi_x}(z, w)|\\
=&|(\Re x)^a(\Im x)^b\langle(b_{x}\mathcal{H}_{\frac{1}{2\alpha}}g)\ast\mathcal{F}[a_{\frac{1}{2\alpha}}^{-1}\varphi_{x_0}] k_z, k_w \rangle|\\
=&|\int_{\mathbb{C}^n}(\Re x)^a(\Im x)^b(b_{x}\mathcal{H}_{\frac{1}{2\alpha}}g)\ast\mathcal{F}[a_{\frac{1}{2\alpha}}^{-1}\varphi_{x_0}](y)
k_z(y) \overline{k_w(y)}(\frac{\alpha}{\pi})^n e^{-\alpha|y|^2}dv(y)|\\
=&|\int_{\mathbb{C}^n}(\Re x)^a(\Im x)^b(b_{x}\mathcal{H}_{\frac{1}{2\alpha}}g)\ast\mathcal{F}[a_{\frac{1}{2\alpha}}^{-1}\varphi_{x_0}](y)
M_{z,w}(y)dv(y)|\\
=&|\int_{\mathbb{C}^n}(\Re x)^a(\Im x)^b\int_{\mathbb{C}^n} e^{2\pi i x \cdot (y-\eta)}
\mathcal{H}_{\frac{1}{2\alpha}}g(y-\eta)\mathcal{F}[a_{\frac{1}{2\alpha}}^{-1}\varphi_{x_0}](\eta)dv(\eta)
M_{z,w}(y)dv(y)|\\
=&|\int_{\mathbb{C}^n}\int_{\mathbb{C}^n} (\Re x)^a(\Im x)^b e^{-2\pi i x \cdot\eta}
\mathcal{H}_{\frac{1}{2\alpha}}g(y-\eta)\mathcal{F}[a_{\frac{1}{2\alpha}}^{-1}\varphi_{x_0}](\eta)dv(\eta)
e^{2\pi i x \cdot y}M_{z,w}(y)dv(y)|
\end{align*}
Since for any $a,b\in \mathbb{Z}_{+}^n$ with $|a|+|b|\leq 2n+1$, $\partial^{a}_{\Re \eta}\partial^{b}_{\Im \eta}\mathcal{H}_{\frac{1}{2\alpha}}g(\eta)dv(\eta)$ is a Carleson measure on the Fock space and $\mathcal{F}[a_{\frac{1}{2\alpha}}^{-1}\varphi_{x_0}]$ is in the Schwartz space,
 by the proof of Lemma \ref{Lemma 2.1} we have
$\partial^{a}_{\Re \eta}\partial^{b}_{\Im \eta}[\mathcal{H}_{\frac{1}{2\alpha}}g(y-\eta)\mathcal{F}[a_{\frac{1}{2\alpha}}^{-1}\varphi_{x_0}](\eta)]$
is integrable with respect to $\eta$. By the properties of Fourier transform \cite[page 12]{Duoandikoetxea}, we have
\begin{equation}\label{equation 3.1}
\begin{split}
&\int_{\mathbb{C}^n}(\Re x)^a(\Im x)^b e^{-2\pi i x \cdot \eta}
\mathcal{H}_{\frac{1}{2\alpha}}g(y-\eta)\mathcal{F}[a_{\frac{1}{2\alpha}}^{-1}\varphi_{x_0}](\eta)dv(\eta)\\
=&(\frac{-1}{2\pi i})^{|a|+|b|}\int_{\mathbb{C}^n} e^{-2\pi i x \cdot \eta}\partial^{a}_{\Re \eta}\partial^{b}_{\Im \eta}
[\mathcal{H}_{\frac{1}{2\alpha}}g(y-\eta)\mathcal{F}[a_{\frac{1}{2\alpha}}^{-1}\varphi_{x_0}](\eta)]dv(\eta)\\
=&\int_{\mathbb{C}^n} e^{-2\pi i x \cdot \eta}\sum_{a'\leq a,b'\leq b}C_{a,b,a',b'}
[\partial^{a'}_{\Re \eta}\partial^{b'}_{\Im \eta}\mathcal{H}_{\frac{1}{2\alpha}}g(y-\eta)]
\partial^{a-a'}_{\Re \eta}\partial^{b-b'}_{\Im \eta}\mathcal{F}[a_{\frac{1}{2\alpha}}^{-1}\varphi_{x_0}(\eta)]dv(\eta),\\
\end{split}
\end{equation}
where $\{C_{a,b,a',b'}\}$ are constants and $a'\leq a,b'\leq b$ means for any $j\leq n$ we have $a_j'\leq a_j,b_j'\leq b_j.$ Since
$|M_{z,w}(y)|=(\frac{\alpha}{\pi})^n e^{-\frac{\alpha}{2}|y-z|^2-\frac{\alpha}{2}|y-w|^2}$and
$$\int_{\mathbb{C}^n} |M_{z,w}(y)|dv(w)\thickapprox e^{-\frac{\alpha}{2}|y-z|^2},$$
we have
\begin{align*}
&|(\Re x)^a(\Im x)^b|\sup_{z}\int_{\mathbb{C}^n}|\widetilde{\psi_{x}}(z,w)|dv(w)\\
=&\sup_{z}\int_{\mathbb{C}^n}|\int_{\mathbb{C}^n}\int_{\mathbb{C}^n} e^{-2\pi i x \cdot \eta}\sum_{a'\leq a,b'\leq b}C_{a,b,a',b'}
[\partial^{a'}_{\Re \eta}\partial^{b'}_{\Im \eta}\mathcal{H}_{\frac{1}{2\alpha}}g(y-\eta)]\\
&\times \partial^{a-a'}_{\Re \eta}\partial^{b-b'}_{\Im \eta}\mathcal{F}[a_{\frac{1}{2\alpha}}^{-1}\varphi_{x_0}(\eta)]dv(\eta)e^{2\pi i x \cdot y}M_{z,w}(y)dv(y)|dv(w)\\
\leq& C\sum_{a'\leq a,b'\leq b}\sup_{z}\int_{\mathbb{C}^n}\int_{\mathbb{C}^n}\int_{\mathbb{C}^n}
|\partial^{a'}_{\Re \eta}\partial^{b'}_{\Im \eta}\mathcal{H}_{\frac{1}{2\alpha}}g(y-\eta)|\\
\end{align*}
\begin{align*}
\times & |\partial^{a-a'}_{\Re \eta}\partial^{b-b'}_{\Im \eta}\mathcal{F}[a_{\frac{1}{2\alpha}}^{-1}\varphi_{x_0}(\eta)]|dv(\eta)|M_{z,w}(y)|dv(y)dv(w)\\
\leq& C\sum_{a'\leq a,b'\leq b}\sup_{z}\int_{\mathbb{C}^n}\int_{\mathbb{C}^n}
|\partial^{a'}_{\Re \eta}\partial^{b'}_{\Im \eta}\mathcal{H}_{\frac{1}{2\alpha}}g(y-\eta)|\\
\times & \int_{\mathbb{C}^n}|M_{z,w}(y)|dv(w)dv(y)|\partial^{a-a'}_{\Re \eta}\partial^{b-b'}_{\Im \eta}
\mathcal{F}[a_{\frac{1}{2\alpha}}^{-1}\varphi_{x_0}(\eta)]|dv(\eta)\\
\leq&C\sum_{a'\leq a,b'\leq b}\int_{\mathbb{C}^n}\sup_{z}\int_{\mathbb{C}^n}
|\partial^{a'}_{\Re \eta}\partial^{b'}_{\Im \eta}\mathcal{H}_{\frac{1}{2\alpha}}g(y-\eta)|e^{-\frac{\alpha}{2}|y-z|^2}dv(y)|\partial^{a-a'}_{\Re \eta}\partial^{b-b'}_{\Im \eta}
\mathcal{F}[a_{\frac{1}{2\alpha}}^{-1}\varphi_{x_0}(\eta)]|dv(\eta).
\end{align*}
Since for any $\eta$,
\begin{equation*}
\begin{split}
&\sup_{z}\int_{\mathbb{C}^n}|\partial^{a'}_{\Re \eta}\partial^{b'}_{\Im \eta}\mathcal{H}_{\frac{1}{2\alpha}}g(y-\eta)|e^{-\frac{\alpha}{2}|y-z|^2}dv(y)\\
=&\sup_{z}\int_{\mathbb{C}^n}|\partial^{a'}_{\Re y}\partial^{b'}_{\Im y}\mathcal{H}_{\frac{1}{2\alpha}}g(y-\eta)|e^{-\frac{\alpha}{2}|y-z|^2}dv(y)\\
=&\sup_{z}\int_{\mathbb{C}^n}|\partial^{a'}_{\Re y}\partial^{b'}_{\Im y}\mathcal{H}_{\frac{1}{2\alpha}}g(y)|e^{-\frac{\alpha}{2}|y+\eta-z|^2}dv(y)\\
=&(\frac{\pi}{\alpha})^n\|\mathcal{H}_{\frac{2}{\alpha}}(|\partial^{a'}_{\Re y}\partial^{b'}_{\Im y}\mathcal{H}_{\frac{1}{2\alpha}}g|) \|_{\infty},
\end{split}
\end{equation*}
we obtain
\begin{align*}
&|(\Re x)^a(\Im x)^b|\sup_{z}\int_{\mathbb{C}^n}|\widetilde{\psi_{x}}(z,w)|dv(w)\\
\leq&C\sum_{a'\leq a,b'\leq b}(\frac{\pi}{\alpha})^n\|\mathcal{H}_{\frac{2}{\alpha}}(|\partial^{a'}_{\Re y}\partial^{b'}_{\Im y}\mathcal{H}_{\frac{1}{2\alpha}}g(y)|) \|_{\infty}\int_{\mathbb{C}^n}|\partial^{a-a'}_{\Re \eta}\partial^{b-b'}_{\Im \eta}
\mathcal{F}[a_{\frac{1}{2\alpha}}^{-1}\varphi_{x_0}(\eta)]|dv(\eta)\\
\leq&C'\sum_{a'\leq a,b'\leq b}\|\mathcal{H}_{\frac{2}{\alpha}}(|\partial^{a'}_{\Re y}\partial^{b'}_{\Im y}\mathcal{H}_{\frac{1}{2\alpha}}g|) \|_{\infty},
\end{align*}
where $C'$ is a constant and the last inequality follows from the fact that $\mathcal{F}[a_{\frac{1}{2\alpha}}^{-1}\varphi_{x_0}(\eta)]$ is a Schwartz function.
The binomial expansion gives that there is a constant $c$ such that
$$(1+|\Re x|+|\Im x|)^{2n+1}\leq c \sum_{|a|+|b|\leq 2n+1}|(\Re x)^a||(\Im x)^b|.$$
Thus we have
\begin{equation*}
\begin{split}
&\sup_{z}\int_{\mathbb{C}^n}|\widetilde{\psi_{x}}(z,w)|dv(w)\\
\leq&\frac{c \sum_{|a|+|b|\leq 2n+1}|(\Re x)^a||(\Im x)^b|}{(1+|\Re x|+|\Im x|)^{2n+1}} \sup_{z}\int_{\mathbb{C}^n}|\langle
(b_{x}\mathcal{H}_{\frac{1}{2\alpha}}g)\ast\mathcal{F}[a_{\frac{1}{2\alpha}}^{-1}\varphi_{x_0}] k_z, k_w \rangle|dv(w)\\
\leq &C_{n,\alpha}\frac{\sum_{|a|+|b|\leq 2n+1}\|\mathcal{H}_{\frac{2}{\alpha}}(|\partial^{a}_{\Re y}\partial^{b}_{\Im y}\mathcal{H}_{\frac{1}{2\alpha}}g(y)|) \|_{\infty}}{(1+|\Re x|+|\Im x|)^{2n+1}}
\end{split}
\end{equation*}
Similarly, we have
\begin{equation*}
\begin{split}
&\sup_{w}\int_{\mathbb{C}^n}|\widetilde{\psi_{x}}(z,w)|dv(z)\\
\leq &C_{n,\alpha}\frac{\sum_{|a|+|b|\leq 2n+1}\|\mathcal{H}_{\frac{2}{\alpha}}(|\partial^{a}_{\Re y}\partial^{b}_{\Im y}\mathcal{H}_{\frac{1}{2\alpha}}g(y)|) \|_{\infty}}{(1+|\Re x|+|\Im x|)^{2n+1}}.
\end{split}
\end{equation*}
Thus we conclude
\begin{align*}
\|T_{g_x} \|&=\|T_{\psi_x}\|\\
&\lesssim C_{n,\alpha}\frac{\sum_{|a|+|b|\leq 2n+1}\|\mathcal{H}_{\frac{2}{\alpha}}(|\partial^{a}_{\Re y}\partial^{b}_{\Im y}\mathcal{H}_{\frac{1}{2\alpha}}g(y)|)\|_{\infty} }{(1+|\Re x|+|\Im x|)^{2n+1}}
\end{align*}
to complete the proof.
\end{proof}
Now  we are ready to present the proof of  the main theorem.
\begin{proof}[Proof of Theorem \ref{Theorem 1.2}]
Since $|\partial^{a}_{\Re y}\partial^{b}_{\Im y}\mathcal{H}_{\frac{1}{2\alpha}}g(y)|dv(y)$ is a Carleson measure, it means
$$\sum_{|a|+|b|\leq 2n+1}\|\mathcal{H}_{\frac{2}{\alpha}}(|\partial^{a}_{\Re y}\partial^{b}_{\Im y}\mathcal{H}_{\frac{1}{2\alpha}}g(y)|)\|_{\infty}<\infty.$$

For  $x\in\Gamma$, let $g_x=(\mathcal{H}_{\frac{1}{2\alpha}}g)\ast\mathcal{F}(\varphi_{x} a_{\frac{1}{2\alpha}}^{-1})$.
By Proposition \ref{proposition 3.2}, we have
$$\|T_{g_x}\|\leq
C_{n,\alpha}\frac{\sum_{|a|+|b|\leq 2n+1}\|\mathcal{H}_{\frac{2}{\alpha}}(|\partial^{a}_{\Re y}\partial^{b}_{\Im y}\mathcal{H}_{\frac{1}{2\alpha}}g(y)|)\|_{\infty} }{(1+|\Re x|+|\Im x|)^{2n+1}}.$$
Since
$$\sum_{x\in \Gamma}\frac{1}{(1+|\Re x|+|\Im x|)^{2n+1}}<C <\infty,$$
 the series $\sum_{x\in \Gamma}T_{g_x}$ of bounded operators converges to a bounded operator $X$ in the operator norm topology and
$$\|X\|\leq C_{n,\alpha}C\sum_{|a|+|b|\leq 2n+1}\|\mathcal{H}_{\frac{2}{\alpha}}(|\partial^{a}_{\Re y}\partial^{b}_{\Im y}\mathcal{H}_{\frac{1}{2\alpha}}g(y)|) \|_{\infty}.$$
We will show that $X$ is the bounded extension of $T_g$. To do so,
by Lemma \ref{Lemma 2.2}, for any $z\in\mathbb{C}^n$, we have
\begin{align*}
\langle T_gk_z,k_z\rangle&=\mathcal{H}_{\frac{1}{\alpha}}g(z)\\
&=\sum_{x\in \Gamma}\mathcal{H}_{\frac{1}{\alpha}}\Big[ (\mathcal{H}_{\frac{1}{2\alpha}}g)\ast\mathcal{F}(\varphi_{x} a_{\frac{1}{2\alpha}}^{-1})\Big](z)\\
&=\sum_{x\in \Gamma}\langle T_{(\mathcal{H}_{\frac{1}{2\alpha}}g)\ast\mathcal{F}(\varphi_{x} a_{\frac{1}{2\alpha}}^{-1})}k_z,k_z \rangle\\
&=\sum_{x\in \Gamma}\langle T_{g_x}k_z,k_z \rangle\\
&=\langle Xk_z,k_z\rangle.
\end{align*}
This implies  $$\langle T_gK_z,K_z\rangle= \langle XK_z,K_z\rangle .$$
Since $\langle T_gK_z,K_w\rangle$ and $\langle XK_z,K_w\rangle$ are both analytic with respect to $w$ and anti-analytic with respect to $z$, we have
$$\langle T_gK_z,K_w\rangle=\langle XK_z,K_w\rangle$$
for any $z,w\in \mathbb{C}^n$. Thus this implies
 $$T_gK_z=XK_z$$ for any $z\in \mathbb{C}^n$. So $X$ is the extension of $T_g$ and hence
$\sum_{x\in \Gamma}T_{g_x}$ converges to $T_g$ in the operator norm topology.

To finish the proof of the main theorem,  we need only show that $T_{g_x}$ has the following integral representation:
for any $t\geq 0,$
$$T_{g_x}=\int_{\mathbb{C}^n}\mathcal{F}(\varphi_{x} a_{\frac{1}{2\alpha}+t}^{-1})(y)T_{\tau_{y}\mathcal{H}_{\frac{1}{2\alpha}+t}g}dv(y)
,$$

To do so, first we show that the map
$$y\rightarrow W_yT_{\mathcal{H}_{\frac{1}{2\alpha}+t}g}W^*_y=T_{\tau_{y}\mathcal{H}_{\frac{1}{2\alpha}+t}g}$$
is uniformly continuous from $\mathbb{C}^n$ to the space of bounded linear operator with respect to the operator norm topology.

Since $|\mathcal{H}_{\frac{1}{2\alpha}}g(y)|dv(y)$ is a Carleson measure,  by Lemma \ref{Lemma 2.1}, we have
$$\mathcal{H}_{\frac{1}{4\alpha}}(\mathcal{H}_{\frac{1}{2\alpha}+t}g)=\gamma_{t+\frac{1}{4\alpha}}\ast \mathcal{H}_{\frac{1}{2\alpha}}g$$
is bounded for any $t\geq0$. Thus (\ref{equation 1.2}) and the norm estimation above to replace $g$ by $\mathcal{H}_{\frac{1}{2\alpha}+t}g$ give that
$$\|T_{\mathcal{H}_{\frac{1}{2\alpha}+t}g}\|\lesssim\sum_{|a|+|b|\leq 2n+1}\|\mathcal{H}_{\frac{2}{\alpha}}(|\partial^{a}_{\Re y}\partial^{b}_{\Im y}\mathcal{H}_{\frac{1}{2\alpha}}\mathcal{H}_{\frac{1}{2\alpha}+t}g(y)|) \|_{\infty}\lesssim \|\mathcal{H}_{\frac{1}{4\alpha}}(\mathcal{H}_{\frac{1}{2\alpha}+t}g)\|_{\infty} <\infty.$$
So  $T_{\mathcal{H}_{\frac{1}{2\alpha}+t}g}$ is bounded.
Direct calculation shows
$$W^*_yW_{y'}=e^{-i\alpha \Im\langle -y,y'\rangle}W_{y'-y}$$
for any $y,y'\in \mathbb{C}^n$. As $W_y$ is a unitary operator, the following equalities hold:
\begin{align*}
\|W_yT_{\mathcal{H}_{\frac{1}{2\alpha}+t}g}W^*_y-W_{y'}T_{\mathcal{H}_{\frac{1}{2\alpha}+t}g}W^*_{y'}\|
&=\|T_{\mathcal{H}_{\frac{1}{2\alpha}+t}g}-W_{y'-y}T_{\mathcal{H}_{\frac{1}{2\alpha}+t}g}W^*_{y'-y}\|\\
&=\|T_{\mathcal{H}_{\frac{1}{2\alpha}+t}g}-T_{\tau_{y'-y}\mathcal{H}_{\frac{1}{2\alpha}+t}g}\|\\
&=\|T_{(\mathcal{H}_{\frac{1}{2\alpha}+t}g)-\tau_{y'-y}(\mathcal{H}_{\frac{1}{2\alpha}+t}g)}\|.
\end{align*}
By (\ref{equation 1.2}) and the norm estimation above, for some $s\in (0,\frac{1}{2\alpha})$, we have
\begin{align*}
& \|T_{(\mathcal{H}_{\frac{1}{2\alpha}+t}g)-\tau_{y'-y}(\mathcal{H}_{\frac{1}{2\alpha}+t}g)}\|\\
\lesssim&\sup_{z}|[(\mathcal{H}_{\frac{1}{2\alpha}+t}g)-\tau_{y'-y}(\mathcal{H}_{\frac{1}{2\alpha}+t}g)]\ast\gamma_s(z)|\\
\leq&\sup_{z}\int_{\mathbb{C}^n}|\mathcal{H}_{\frac{1}{2\alpha}+t}g(w)| |\gamma_s(z-w)-\gamma_s(z-w+y-y')|dv(w)\\
=&(\frac{1}{s\pi})^n\sup_{z} \int_{\mathbb{C}^n}|\mathcal{H}_{\frac{1}{2\alpha}+t}g(w)| |\mathrm{e}^{-\frac{|z-w|^{2}}{s}}-\mathrm{e}^{-\frac{|z-w+y-y'|^{2}}{s}}|dv(w)\\
=&(\frac{1}{s\pi})^n\sup_{z} \int_{\mathbb{C}^n}|\mathcal{H}_{\frac{1}{2\alpha}+t}g(w)\mathrm{e}^{-\frac{|z-w|^{2}}{s}}| |1-\mathrm{e}^{\frac{|z-w|^{2}}{s}-\frac{|z-w+y-y'|^{2}}{s}}|dv(w)\\
=&(\frac{1}{s\pi})^n\sup_{z} \int_{\mathbb{C}^n}|\mathcal{H}_{\frac{1}{2\alpha}+t}g(w)\mathrm{e}^{-\frac{|z-w|^{2}}{s}}|
|1-\mathrm{e}^{-\frac{2\Re\langle z-w,y-y'\rangle+|y-y'|^{2}}{s}}|dv(w)\\
\leq&(\frac{1}{s\pi})^n\sup_{z} \int_{\mathbb{C}^n}|\mathcal{H}_{\frac{1}{2\alpha}+t}g(w)\mathrm{e}^{-\frac{|z-w|^{2}}{s}}|
|\sum_{n=1}^{\infty}\frac{1}{n!}(-\frac{2\Re\langle z-w,y-y'\rangle+|y-y'|^{2}}{s})^n |dv(w)\\
\leq&(\frac{1}{s\pi})^n\sup_{z} \int_{\mathbb{C}^n}|\mathcal{H}_{\frac{1}{2\alpha}+t}g(w)\mathrm{e}^{-\frac{|z-w|^{2}}{s}}|
\sum_{n=1}^{\infty}\frac{1}{n!}(\frac{2| z-w||y-y'|+|y-y'|^{2}}{s})^n dv(w).
\end{align*}
Without loss of generality, we can suppose $|y-y'|\leq 1$. The above estimations give
\begin{align*}
& \|T_{(\mathcal{H}_{\frac{1}{2\alpha}+t}g)-\tau_{y'-y}(\mathcal{H}_{\frac{1}{2\alpha}+t}g)}\|\\
\leq&(\frac{1}{s\pi})^n\sup_{z} \int_{\mathbb{C}^n}|\mathcal{H}_{\frac{1}{2\alpha}+t}g(w)\mathrm{e}^{-\frac{|z-w|^{2}}{s}}|
\sum_{n=1}^{\infty}\frac{1}{n!}(\frac{2| z-w|+|y-y'|}{s})^n|y-y'|^n dv(w)\\
\leq&|y-y'|(\frac{1}{s\pi})^n\sup_{z} \int_{\mathbb{C}^n}|\mathcal{H}_{\frac{1}{2\alpha}+t}g(w)\mathrm{e}^{-\frac{|z-w|^{2}}{s}}|
\sum_{n=1}^{\infty}\frac{1}{n!}(\frac{2| z-w|+1}{s})^n dv(w)\\
\leq&|y-y'|(\frac{1}{s\pi})^n\sup_{z} \int_{\mathbb{C}^n}|\mathcal{H}_{\frac{1}{2\alpha}+t}g(w)|\mathrm{e}^{-\frac{|z-w|^{2}}{s}}
e^{\frac{2| z-w|+1}{s}} dv(w)\\
\leq&|y-y'|(\frac{1}{s\pi})^n\sup_{z} \int_{\mathbb{C}^n}|\mathcal{H}_{\frac{1}{2\alpha}+t}g(w)|\mathrm{e}^{\frac{-|z-w|^{2}+4^2+ \frac{|z-w|^2}{4}+1}{s}} dv(w))\\
=&|y-y'|(\frac{1}{s\pi})^n\| |\mathcal{H}_{\frac{1}{2\alpha}+t}g(w)|\ast f\|_{\infty},
\end{align*}
where $f(w)=\mathrm{e}^{\frac{-\frac{3}{4}|w|^{2}+17}{s}}$ is a Schwartz function.  As Lemma \ref{Lemma 2.1} gives
$$(\frac{1}{s\pi})^n\| |\mathcal{H}_{\frac{1}{2\alpha}+t}g(w)|\ast f\|_{\infty}<\infty,$$
the above estimations imply
$$\|W_yT_{\mathcal{H}_{\frac{1}{2\alpha}+t}g}W^*_y-W_{y'}T_{\mathcal{H}_{\frac{1}{2\alpha}+t}g}W^*_{y'}\|\lesssim |y-y'|.$$
Thus the map
$$y\rightarrow W_yT_{\mathcal{H}_{\frac{1}{2\alpha}+t}g}W^*_y$$
is uniformly continuous from $\mathbb{C}^n$ to the space of bounded linear operator with respect to the norm topology. Since $\mathcal{F}(\varphi_{x} a_{\frac{1}{2\alpha}+t}^{-1})$ is a Schwartz function and
$$\|W_yT_{\mathcal{H}_{\frac{1}{2\alpha}+t}g}W^*_y\|\leq \|T_{\mathcal{H}_{\frac{1}{2\alpha}+t}g}\|,$$
 the integral
$$\int_{\mathbb{C}^n}\mathcal{F}(\varphi_{x} a_{\frac{1}{2\alpha}+t}^{-1})(y)W_yT_{\mathcal{H}_{\frac{1}{2\alpha}+t}g}W^*_ydv(y),$$
converges in the operator norm topology.

To establish the integral representation of $T_{g_x}$, as the Berezin transform is injective, next we calculate the Berezin transform of  $\int_{\mathbb{C}^n}W_yT_{\mathcal{H}_{\frac{1}{2\alpha}+t}g}W^*_y\mathcal{F}(\varphi_{x} a_{\frac{1}{2\alpha}+t}^{-1})(y)dv(y)$ and $T_{g_x}$  to get
\begin{align*}
&\langle[\int_{\mathbb{C}^n}W_yT_{\mathcal{H}_{\frac{1}{2\alpha}+t}g}W^*_y\mathcal{F}(\varphi_{x} a_{\frac{1}{2\alpha}+t}^{-1})(y)dv(y)]k_z, k_z\rangle\\
=&\int_{\mathbb{C}^n}\mathcal{F}(\varphi_{x} a_{\frac{1}{2\alpha}+t}^{-1})(y)\langle [W_yT_{\mathcal{H}_{\frac{1}{2\alpha}+t}g}W^*_y]k_z, k_z\rangle dv(y)\\
=&\int_{\mathbb{C}^n}\mathcal{F}(\varphi_{x} a_{\frac{1}{2\alpha}+t}^{-1})(y)\langle [T_{\tau_{y} \mathcal{H}_{\frac{1}{2\alpha}+t}g}]k_z, k_z\rangle dv(y)\\
=&\int_{\mathbb{C}^n}\mathcal{F}(\varphi_{x} a_{\frac{1}{2\alpha}+t}^{-1})(y)\tau_{y} \mathcal{H}_{\frac{1}{2\alpha}+t}g\ast\gamma_{\frac{1}{\alpha}}(z)dv(y)\\
=& \mathcal{H}_{\frac{1}{2\alpha}+t}g\ast\gamma_{\frac{1}{\alpha}}\ast\mathcal{F}(\varphi_{x} a_{\frac{1}{2\alpha}+t}^{-1})(z)\\
=& \mathcal{H}_{\frac{1}{2\alpha}}g\ast{\gamma_{t}}\ast\gamma_{\frac{1}{\alpha}}\ast\mathcal{F}(\varphi_{x} a_{\frac{1}{2\alpha}+t}^{-1})(z)\\
=& \mathcal{H}_{\frac{1}{2\alpha}}g\ast\mathcal{F}(a_t)\ast\gamma_{\frac{1}{\alpha}}\ast\mathcal{F}(\varphi_{x} a_{\frac{1}{2\alpha}+t}^{-1})(z)\\
=& \mathcal{H}_{\frac{1}{2\alpha}}g\ast\mathcal{F}(\varphi_{x} a_{\frac{1}{2\alpha}}^{-1})\ast\gamma_{\frac{1}{\alpha}}(z)\\
=& \langle T_{g_x}k_z, k_z\rangle
\end{align*}
where the second equality follows from
$$W_yT_{\mathcal{H}_{\frac{1}{2\alpha}+t}g}W^*_y=T_{\tau_{y}\mathcal{H}_{\frac{1}{2\alpha}+t}g};$$
the third equality follows from the relation between the Berezin transform and the heat transform;
the fourth and fifth equalities follow from the semigroup property of the heat transform. As the Berezin transform is injective,  we conclude
\begin{align*}T_{g_x}&=\int_{\mathbb{C}^n}\mathcal{F}(\varphi_{x} a_{\frac{1}{2\alpha}+t}^{-1})(y)W_yT_{\mathcal{H}_{\frac{1}{2\alpha}+t}g}W^*_ydv(y)\\
&=\int_{\mathbb{C}^n}\mathcal{F}(\varphi_{x} a_{\frac{1}{2\alpha}+t}^{-1})(y)T_{\tau_{y}\mathcal{H}_{\frac{1}{2\alpha}+t}g}dv(y)
 \end{align*}
to complete the proof.
\end{proof}

\section{Schatten $p$-class}
In this section, we will apply our decomposition theory for a Toeplitz operator to estimate the Schatten $p$-norm of the product of two Toeplitz operators.

Let $\mathcal{S}_p$ denote the Schatten $p$-class on $F_{\alpha}^2$. A compact operator $A$ on $F^2_{\alpha}$ is in $\mathcal{S}_p$ if
$$\|A\|_{\mathcal{S}_p}=\sup\Big\{\Big(\sum\|Ae_n\|^p\Big)^{1/p}: \{e_n\} \text{ is an orthonormal set }\Big\}<\infty.$$
For a Toeplitz operator $T_f$, we have
\begin{equation}\label{schatten}
\|T_f\|_{\mathcal{S}_p}\lesssim \|f\|_{L^p(\mathbb{C}^n,dv)},
\end{equation}
see \cite[Lemma 6.30]{Zhu}.

\begin{lemma}\label{weakint}
Let $\{X,M,m\}$ be a measure space and $\mathcal{H}$ be a separable Hilbert space. Let $\mathcal{S}_p$  denote the Schatten $p$-class on $\mathcal{H}$. Suppose that $F: X\rightarrow\mathcal{S}_p$ is a weakly M-measurable map.
If
$$\int_X \|F(x)\|_{\mathcal{S}_p}dm(x)<\infty,$$
then
$$K=\int_{X}F(x)dm(x)\in \mathcal{S}_p,\text{ and } \|K\|_{\mathcal{S}_p}\leq \int_X \|F(x)\|_{\mathcal{S}_p}dm(x)$$
where the integral is taken in the weak sense.
\end{lemma}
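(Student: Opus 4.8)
The statement is a standard Bochner-type integrability result for Schatten classes, so the plan is to reduce it to the Banach-space-valued Bochner integral theory. First I would observe that $\mathcal{S}_p$ with the norm $\|\cdot\|_{\mathcal{S}_p}$ is a Banach space (it is complete; for $1\le p<\infty$ it is even a separable Banach space when $\mathcal{H}$ is separable), so the general theory of vector-valued integration applies once we know $F$ is strongly measurable and Bochner integrable. The hypothesis gives weak measurability, i.e. $x\mapsto \langle F(x)u,v\rangle$ is measurable for all $u,v\in\mathcal{H}$ (equivalently $x\mapsto \operatorname{tr}(F(x)A)$ is measurable for a suitable norming family); since $\mathcal{H}$ is separable one upgrades this to separable-valued-plus-weakly-measurable, and then Pettis's measurability theorem yields that $F$ is strongly ($m$-)measurable as an $\mathcal{S}_p$-valued map. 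The scalar function $x\mapsto\|F(x)\|_{\mathcal{S}_p}$ is then measurable, and the hypothesis $\int_X\|F(x)\|_{\mathcal{S}_p}\,dm(x)<\infty$ says exactly that $F$ is Bochner integrable.

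Second, I would invoke the basic property of the Bochner integral: if $F$ is Bochner integrable with values in a Banach space $E$, then $\int_X F\,dm\in E$ and $\|\int_X F\,dm\|_E\le\int_X\|F\|_E\,dm$. Applied with $E=\mathcal{S}_p$ this gives $K=\int_X F(x)\,dm(x)\in\mathcal{S}_p$ together with the claimed norm bound. I would also note that the Bochner integral agrees with the weak integral: for all $u,v\in\mathcal{H}$ one has $\langle Ku,v\rangle=\int_X\langle F(x)u,v\rangle\,dm(x)$, which is the sense in which the operator integral is taken in the paper, so the two notions of $\int_X F\,dm$ coincide and there is no ambiguity.

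If one prefers to avoid citing Pettis's theorem wholesale, an alternative concrete route is: fix an orthonormal basis $\{e_j\}$ of $\mathcal{H}$ and approximate $F$ by the finite-rank truncations $F_N(x)=P_N F(x)P_N$ where $P_N$ is the projection onto $\operatorname{span}\{e_1,\dots,e_N\}$; each $F_N$ is visibly strongly measurable (its matrix entries are measurable, being weak pairings) with $\|F_N(x)\|_{\mathcal{S}_p}\le\|F(x)\|_{\mathcal{S}_p}$, and $F_N(x)\to F(x)$ in $\mathcal{S}_p$ for each $x$ (since $P_N\to I$ strongly and $F(x)\in\mathcal{S}_p$). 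Dominated convergence for Bochner integrals then shows $\int_X F_N\,dm\to\int_X F\,dm$ in $\mathcal{S}_p$, and since each $\int_X F_N\,dm\in\mathcal{S}_p$ with uniformly bounded norm $\le\int_X\|F\|_{\mathcal{S}_p}\,dm$, the limit $K$ lies in $\mathcal{S}_p$ with the same bound.

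\textbf{Main obstacle.} The only genuinely delicate point is the measurability upgrade: passing from the given weak measurability to strong ($\mathcal{S}_p$-norm) measurability of $F$, which is where separability of $\mathcal{H}$ (hence of $\mathcal{S}_p$ for $p<\infty$, or at least separability of a norming subspace of its dual) is essential. Once strong measurability is in hand, everything else is the routine machinery of the Bochner integral and presents no difficulty; the norm inequality is immediate and the identification with the weak integral is a one-line pairing argument.
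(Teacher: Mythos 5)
Your proposal is correct, but your primary route is not the one the paper takes. The paper's proof is the elementary one that you sketch only as a fallback: fix an orthonormal basis, let $P_n$ be the projection onto $\mathrm{span}\{e_1,\dots,e_n\}$, write $K-KP_n=\int_X\big(F(x)-F(x)P_n\big)\,dm(x)$, bound $\|K-KP_n\|_{\mathcal{S}_p}\leq\int_X\|F(x)-F(x)P_n\|_{\mathcal{S}_p}\,dm(x)$, and use $\|F(x)-F(x)P_n\|_{\mathcal{S}_p}\leq 2\|F(x)\|_{\mathcal{S}_p}$ together with dominated convergence and the completeness of $\mathcal{S}_p$ (each $KP_n$ being finite rank) to conclude $K\in\mathcal{S}_p$ with the stated bound; no vector-valued measurability theory enters, the integrals being interpreted weakly throughout. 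Your main route through Pettis's theorem and the Bochner integral is heavier machinery but is legitimate and arguably cleaner, provided you supply the one step you correctly flag: the hypothesis is weak-operator measurability, and to apply Pettis you must upgrade it to weak measurability of $F$ as an $\mathcal{S}_p$-valued map, which follows because functionals on $\mathcal{S}_p$ are $A\mapsto\operatorname{tr}(AB)$ with $B\in\mathcal{S}_q$, finite-rank operators are dense in $\mathcal{S}_q$, and $|\operatorname{tr}(F(x)B)-\operatorname{tr}(F(x)B_k)|\leq\|F(x)\|_{\mathcal{S}_p}\|B-B_k\|_{\mathcal{S}_q}$; separability of $\mathcal{S}_p$ (for $p<\infty$, $\mathcal{H}$ separable) then gives strong measurability, and the standard Bochner norm inequality plus the identification $\langle Ku,v\rangle=\int_X\langle F(x)u,v\rangle\,dm(x)$ finishes the argument. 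What your route buys is a self-contained appeal to general theory and an explicit reconciliation of the weak and Bochner integrals (which the paper glosses over); what the paper's route buys is independence from Pettis/Bochner theory, at the cost of asserting the Minkowski-type inequality for the weak integral directly from its definition of the Schatten norm. Note also that both arguments implicitly use the triangle inequality for $\|\cdot\|_{\mathcal{S}_p}$, hence $p\geq 1$, which is the regime in which the lemma is applied.
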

\begin{proof}Let $\{e_i:i=1,\cdots,n,\cdots\}$ denote the orthogonal basis in $\mathcal{H}$. Let $P_n$ be the project on to the space generated by
$\{e_1,\cdots,e_n\}$. We have
$$K-KP_n=\int_{X}F(x)-F(x)P_ndm(x).$$
By the definition of the $\mathcal{S}_p$ norm, we have
$$\|K-KP_n\|_{\mathcal{S}_p}\leq\int_{X}\|F(x)-F(x)P_n\|_{\mathcal{S}_p}dm(x)\text{ and }\lim_{n\rightarrow\infty}\|F(x)-F(x)P_n\|_{\mathcal{S}_p}=0.$$
Since
$$\|F(x)-F(x)P_n\|_{\mathcal{S}_p}\leqslant 2\|F(x)\|_{\mathcal{S}_p},$$
which is integrable, by the dominated convergence theorem we have
$$\lim_{n\rightarrow\infty}\|K-KP_n\|_{\mathcal{S}_p}=\int_{X}\lim_{n\rightarrow\infty}\|F(x)-F(x)P_n\|_{\mathcal{S}_p}dm(x)=0.$$
Since $\mathcal{S}_p$ is closed with respect to the Schatten $p$-norm, we have $K\in \mathcal{S}_p.$ The norm estimation follows directly.
\end{proof}

Recall that the Weyl operator $W_z$ on $F_{\alpha}^2$ is defined by
$$ W_zf(w)=k_zf(w-z).$$
One can check that
\begin{equation}\label{weyl}
W_wW_z=e^{-i \frac{\Im(w\cdot \overline{z})}{t}}W_{w+z},\quad W_wK_z=e^{-i \frac{\Im(w\cdot \overline{z})}{t}}k_{w+z} \text{ and } \|W_zf\|_{F_{\alpha}^2}=\|f\|_{F_{\alpha}^2}.
\end{equation}

\begin{theorem}\label{Theorem 4.4}
Let $A$ be a bounded operator on $F_{\alpha}^2$, if
$$\int_{\mathbb{C}^n}\Big(\int_{\mathbb{C}^n}|\langle Ak_z,k_{z+w}\rangle|^pdv(z)\Big)^{1/p} dv(w)< \infty,$$
then $A$ is in the Schatten $p$-class and
$$ \|A\|_{\mathcal{S}_p}\leq \int_{\mathbb{C}^n}\Big(\int_{\mathbb{C}^n}|\langle Ak_z,k_{z+w}\rangle|^pdv(z)\Big)^{1/p} dv(w).$$
\end{theorem}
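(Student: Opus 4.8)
The plan is to realize $A$ as a vector-valued integral of Weyl-conjugated rank-one-type pieces and then apply Lemma \ref{weakint}. First I would use the reproducing kernel expansion to write, for $f,h$ in the linear span of reproducing kernels,
$$\langle Af,h\rangle = (\tfrac{\alpha}{\pi})^{2n}\int_{\mathbb{C}^n}\int_{\mathbb{C}^n}\langle f,k_z\rangle\,\langle Ak_z,k_{z+w}\rangle\,\langle k_{z+w},h\rangle\,dv(z)\,dv(w),$$
obtained exactly as in the computation in the proof of Lemma \ref{lemma 3.0} but with the substitution $u = z+w$ in the outer integral. This suggests defining, for each fixed $w\in\mathbb{C}^n$, an operator $F(w)$ on $F_\alpha^2$ by
$$F(w) = (\tfrac{\alpha}{\pi})^{2n}\int_{\mathbb{C}^n}\langle Ak_z,k_{z+w}\rangle\,(k_{z+w}\otimes k_z)\,dv(z),$$
where $k_{z+w}\otimes k_z$ denotes the rank-one operator $f\mapsto \langle f,k_z\rangle k_{z+w}$, so that formally $A = \int_{\mathbb{C}^n}F(w)\,dv(w)$.

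The key step is to estimate $\|F(w)\|_{\mathcal{S}_p}$. I would recognize $F(w)$ as (a constant times) $W_w$ composed with a Toeplitz-type operator: since $W_w^* k_{z+w}$ is a unimodular scalar times $k_z$ by (\ref{weyl}), we have $W_w^* F(w) = c\int_{\mathbb{C}^n} h_w(z)\,(k_z\otimes k_z)\,dv(z)$ for a suitable measurable function $h_w$ with $|h_w(z)| = (\tfrac{\alpha}{\pi})^{n}|\langle Ak_z,k_{z+w}\rangle|$ up to the normalization built into $(k_z\otimes k_z)$; but $\int_{\mathbb{C}^n} h_w(z)(k_z\otimes k_z)\,dv(z)$, after absorbing the $(\tfrac{\alpha}{\pi})^n$, is precisely the Toeplitz operator $T_{h_w}$ acting on the Fock space (this is the standard integral representation $T_\psi = (\tfrac{\alpha}{\pi})^n\int \psi(z)(k_z\otimes k_z)\,dv(z)$). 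Since $W_w$ is unitary, $\|F(w)\|_{\mathcal{S}_p} = \|T_{h_w}\|_{\mathcal{S}_p}$, and then (\ref{schatten}) gives
$$\|F(w)\|_{\mathcal{S}_p}\lesssim \|h_w\|_{L^p(\mathbb{C}^n,dv)} = \Big(\int_{\mathbb{C}^n}|\langle Ak_z,k_{z+w}\rangle|^p\,dv(z)\Big)^{1/p}$$
up to dimensional constants; a careful bookkeeping of the $\alpha/\pi$ factors should in fact yield the clean constant $1$ as in the statement.

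Granting this, the hypothesis says exactly that $w\mapsto \|F(w)\|_{\mathcal{S}_p}$ is integrable, so Lemma \ref{weakint} applies and gives $K := \int_{\mathbb{C}^n}F(w)\,dv(w)\in\mathcal{S}_p$ with the desired norm bound. It remains to check $K = A$: both are bounded operators, and by construction $\langle Kf,h\rangle = \langle Af,h\rangle$ for $f,h$ in the dense linear span of reproducing kernels, so $K=A$. I expect the main obstacle to be the two technical measurability/convergence points: first, verifying that $w\mapsto F(w)$ is weakly measurable into $\mathcal{S}_p$ (needed to invoke Lemma \ref{weakint}), and second, justifying the interchange of the order of integration and the identification of $W_w^*F(w)$ with a genuine Toeplitz operator $T_{h_w}$ — in particular checking that $h_w$ is the kind of symbol for which (\ref{schatten}) is valid, which may require first treating the case where $A$ already lies in $\mathcal{S}_p$ (or a finite-rank truncation) and then passing to the limit, or alternatively proving the inequality for the truncated kernels $\chi_{|w|\le R}$ and letting $R\to\infty$.
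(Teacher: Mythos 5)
Your proposal is correct and follows essentially the same route as the paper: expand $\langle Af,h\rangle$ against reproducing kernels, write $k_{z+w}$ as a unimodular multiple of $W_wk_z$ so that the inner integral becomes $W_w$ composed with the Toeplitz operator with symbol $z\mapsto\langle Ak_z,W_wk_z\rangle$, bound its $\mathcal{S}_p$-norm by the $L^p$-norm of that symbol via (\ref{schatten}), and conclude with Lemma \ref{weakint} plus density of the span of kernels. The technical points you flag (weak measurability, constants) are treated no more carefully in the paper's own proof.
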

\begin{proof}
For any $f,g\in F_{\alpha}^2$ we have
\begin{align*}
\langle Af,g\rangle&=\frac{\alpha^n}{\pi^n}\int_{\mathbb{C}^n}\langle Af,k_w\rangle \langle k_w,g\rangle dv(w)\\
&=\frac{\alpha^n}{\pi^n}\int_{\mathbb{C}^n}\langle f,A^* k_w\rangle \langle k_w,g\rangle dv(w)\\
&=\frac{\alpha^{2n}}{\pi^{2n}}\int_{\mathbb{C}^n}\int_{\mathbb{C}^n}\langle f,k_z\rangle \langle k_z ,A^* k_w\rangle \langle k_w,g\rangle dv(z)dv(w)\\
&=\frac{\alpha^{2n}}{\pi^{2n}}\int_{\mathbb{C}^n}\int_{\mathbb{C}^n}\langle f,k_z\rangle \langle Ak_z , k_{w+z}\rangle \langle k_{w+z},g\rangle dv(w)dv(z)\\
&=\frac{\alpha^{2n}}{\pi^{2n}}\int_{\mathbb{C}^n}\int_{\mathbb{C}^n}\langle f,k_z\rangle \langle Ak_z , W_wk_{z}\rangle \langle W_wk_{z},g\rangle dv(w)dv(z)\\
&=\frac{\alpha^{2n}}{\pi^{2n}}\int_{\mathbb{C}^n}\int_{\mathbb{C}^n}\langle f,k_z\rangle \langle Ak_z , W_wk_{z}\rangle \langle k_{z},W^*_wg\rangle dv(w)dv(z).
\end{align*}
Let
$$S_w=\frac{\alpha^n}{\pi^n}\int_{\mathbb{C}^n}\langle Ak_z,W_wk_{z}\rangle k_{z}\otimes k_z dv(z),$$
where the integral is taken in the weak sense. $S_w$ is actually a Toeplitz operator, we have
\begin{align*}
\langle Af,g\rangle=\frac{\alpha^n}{\pi^n}\int_{\mathbb{C}^n} \langle S_w f,W^*_w g\rangle dv(w)
=\frac{\alpha^n}{\pi^n}\int_{\mathbb{C}^n} \langle W_w S_w f, g\rangle dv(w).
\end{align*}
Thus
$$A=\frac{\alpha^n}{\pi^n}\int_{\mathbb{C}^n}W_w S_wdv(w),$$
where the integral is taken in the weak sense. Since $S_w$ is a Toeplitz operator, we have
$$\|W_w S_w\|_{\mathcal{S}_p}\leq\|S_w\|_{\mathcal{S}_p}\leq \Big(\int_{\mathbb{C}^n}|\langle Ak_z,W_w k_{z}\rangle|^pdv(z)\Big)^{1/p}=\Big(\int_{\mathbb{C}^n}|\langle Ak_z, k_{z+w}\rangle|^pdv(z)\Big)^{1/p}.$$
By the hypothesis and Lemma \ref{weakint}, we obtain $A\in\mathcal{S}_p $. The norm estimation follows directly.
\end{proof}

For any $a,b\in \mathbb{Z}_{+}^n$, let
$$J^{a,b}g(y)=\partial^{a}_{\Re y}\partial^{b}_{\Im y}\mathcal{H}_{\frac{1}{2\alpha}}g(y).$$

In the proof of \cite[Theorem 1]{Coburn2010}, the authors get the Schatten $p$-norm estimation for one Toeplitz operator through the result about the pseudo-differenial operator in \cite{CR}.
That is
$$\|T_g\|_{\mathcal{S}_p}\lesssim \sum_{|a|+|b|\leq 2n+1 }  \Big(\int_{\mathbb{C}^n}|J^{a,b}g(\eta)|^pdv(\eta)\Big)^{\frac{1}{p}}.$$
We will show an estimation for the product of Toeplitz operators and our proof has nothing to do with the pseudo-differenial operator. Moreover, we will show that our result implies their result.

\begin{theorem}\label{product}
Let $g$ and $f$ be two measurable functions on $\mathbb{C}^n$ such that $gk_w, fk_w\in L^2(\mathbb{C}^n,d\lambda_{\alpha})$ for any $w\in \mathbb{C}^n$.
If for any $a,b\in \mathbb{Z}_{+}^n$ with $|a|+|b|\leq 2n+1$, $|J^{a,b}g(y)|dv(y)$ and $|J^{a,b}f(y)|dv(y)$ are Carleson measures on the Fock space. we have
$$\|T_fT_g\|_{\mathcal{S}_p}\lesssim \sum_{\mbox{\tiny$\begin{array}{c}
  |a|+|b|\leq 2n+1\\
  |a'|+|b'|\leq 2n+1\end{array}$} }
\sup_{w\in \mathbb{C}^n}\Big(\int_{\mathbb{C}^n} \int_{\mathbb{C}^n}|J^{a,b}f(\xi)|^p|J^{a',b'}g(\eta)|^pe^{-\frac{\alpha}{2}|\xi-\eta+w|^2}dv(\xi)dv(\eta)\Big)^{1/p}.$$
\end{theorem}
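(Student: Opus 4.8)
The plan is to feed the decomposition of Theorem \ref{Theorem 1.2} into Theorem \ref{Theorem 4.4}. Since the Carleson hypotheses hold for $f$ and for $g$, Theorem \ref{Theorem 1.2} gives $T_f=\sum_{x\in\Gamma}T_{f_x}$ and $T_g=\sum_{y\in\Gamma}T_{g_y}$ in operator norm with $\sum_x\|T_{f_x}\|+\sum_y\|T_{g_y}\|<\infty$, so $T_fT_g=\sum_{x,y\in\Gamma}T_{f_x}T_{g_y}$ converges unconditionally in operator norm. Writing $R$ for the right-hand side of the theorem, it then suffices to prove the piecewise estimate
$$\|T_{f_x}T_{g_y}\|_{\mathcal S_p}\ \lesssim\ \frac{R}{\big(1+|\Re x|+|\Im x|\big)^{2n+1}\big(1+|\Re y|+|\Im y|\big)^{2n+1}},$$
because $\sum_{x\in\Gamma}(1+|\Re x|+|\Im x|)^{-(2n+1)}<\infty$ by (\ref{equation 2.1}); the partial sums of $\sum_{x,y}T_{f_x}T_{g_y}$ are then Cauchy in $\mathcal S_p$, and their $\mathcal S_p$-limit must coincide with the operator-norm limit $T_fT_g$. (For $0<p<1$ one argues the same way with the $p$-triangle inequality of $\|\cdot\|_{\mathcal S_p}$.)

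To prove the piecewise estimate, first conjugate by Weyl operators. By Lemma \ref{Lemma 2.3}, $T_{f_x}=W_{\nu_x}T_{\psi^f_x}W_{\nu_x}$ with $\nu_x=\tfrac{i\pi x}{2\alpha}$ and $\psi^f_x=(b_x\mathcal H_{\frac1{2\alpha}}f)\ast\mathcal F[a_{\frac1{2\alpha}}^{-1}\varphi_{x_0}]$, and likewise for $g$; using (\ref{weyl}) and the two-sided unitary invariance of $\|\cdot\|_{\mathcal S_p}$ one gets
$$\|T_{f_x}T_{g_y}\|_{\mathcal S_p}=\big\|\,T_{\psi^f_x}\,W_{d}\,T_{\psi^g_y}\,\big\|_{\mathcal S_p},\qquad d=d_{x,y}:=\tfrac{i\pi(x+y)}{2\alpha},$$
the gain being that $\mathcal F[a_{\frac1{2\alpha}}^{-1}\varphi_{x_0}]$ is now one \emph{fixed} Schwartz function. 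Put $A=T_{\psi^f_x}W_dT_{\psi^g_y}$ (a bounded operator) and apply Theorem \ref{Theorem 4.4}. Inserting a reproducing-kernel resolution of the identity between $W_dT_{\psi^g_y}$ and $T^{*}_{\psi^f_x}=T_{\overline{\psi^f_x}}$, and using $W_d^{*}k_\zeta=(\text{unimodular})\,k_{\zeta-d}$ together with $\widetilde{\overline h}(a,b)=\overline{\widetilde h(b,a)}$, gives
$$|\langle Ak_z,k_{z+w}\rangle|\ \lesssim\ \int_{\mathbb C^n}|\widetilde{\psi^f_x}(\zeta,z+w)|\ |\widetilde{\psi^g_y}(z,\zeta-d)|\ dv(\zeta).$$
Next I copy, almost verbatim, the integration-by-parts estimate from the proof of Proposition \ref{proposition 3.2}: expanding the convolution, moving up to $2n+1$ derivatives off the fixed Schwartz factor onto $\mathcal H_{\frac1{2\alpha}}f$, invoking the binomial-to-polynomial inequality, and using $|M_{p,q}(\xi)|=(\tfrac\alpha\pi)^n e^{-\alpha|\xi-\frac{p+q}{2}|^2}e^{-\frac\alpha4|p-q|^2}$ for the Gaussian kernel $M_{p,q}$ of the two-variable Berezin transform, one obtains
$$|\widetilde{\psi^f_x}(\zeta,z+w)|\ \lesssim\ \frac{e^{-\frac\alpha4|\zeta-z-w|^2}}{(1+|\Re x|+|\Im x|)^{2n+1}}\sum_{|a|+|b|\le 2n+1}\int_{\mathbb C^n}|J^{a,b}f(\xi)|\ \Xi_{a,b}\!\big(\xi-\tfrac{\zeta+z+w}{2}\big)\ dv(\xi),$$
with $\Xi_{a,b}$ fixed rapidly decreasing functions (Schwartz factors convolved with a Gaussian), and symmetrically for $|\widetilde{\psi^g_y}(z,\zeta-d)|$, with fixed rapidly decreasing functions $\Theta_{a',b'}$, the factor $(1+|\Re y|+|\Im y|)^{-(2n+1)}$, the symbols $J^{a',b'}g$, the Gaussian $e^{-\frac\alpha4|z-\zeta+d|^2}$, and centre $\tfrac{z+\zeta-d}{2}$.

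Substituting and integrating in $\zeta$, the two $\tfrac\alpha4$-Gaussians coalesce (midpoint identity again) into $e^{-\frac\alpha2|\zeta-m|^2}e^{-\frac\alpha8|w+d|^2}$, and after a centring change of variable one is left with a kernel $K(\xi,\eta,z,w)=\int e^{-2\alpha|s|^2}\,\Xi(\xi-z-c_1(w)-s)\,\Theta(\eta-z-c_2(w)-s)\,dv(s)$ with $c_1(w)-c_2(w)=\tfrac{w+d}{2}$. Before integrating in $z$, bound $|\langle Ak_z,k_{z+w}\rangle|^p$ by the two polynomial factors to the power $p$, times $e^{-\frac{\alpha p}{8}|w+d|^2}$, times a finite sum of terms $\big(\int\!\!\int|J^{a,b}f(\xi)|\,|J^{a',b'}g(\eta)|\,K\,dv(\xi)dv(\eta)\big)^p$, and apply Jensen's inequality in the probability measure $K\,dv(\xi)dv(\eta)/\!\!\int\!\!\int K$ — the mass $\int\!\!\int K$ being a constant times $e^{-\frac\alpha8|w+d|^2}$ — to land the $p$-th power inside on $|J^{a,b}f(\xi)|^p|J^{a',b'}g(\eta)|^p$. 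Integrating in $z$, $\int_{\mathbb C^n}K\,dv(z)$ equals a constant times a fixed rapidly decreasing function $\Omega(\xi-\eta-\tfrac{w+d}{2})$, so $\big(\int|\langle Ak_z,k_{z+w}\rangle|^p dv(z)\big)^{1/p}$ is controlled by $e^{-\frac\alpha8|w+d|^2}$ times the two polynomial factors times $\sum\big(\int\!\!\int|J^{a,b}f(\xi)|^p|J^{a',b'}g(\eta)|^p\,\Omega(\xi-\eta-\tfrac{w+d}{2})\,dv(\xi)dv(\eta)\big)^{1/p}$. Integrating in $w$ — substituting $w'=\tfrac{w+d}{2}$, which removes $d$, and using $\int e^{-\frac\alpha2|w'|^2}dv(w')<\infty$ — and then replacing $\Omega$ by the Gaussian $e^{-\frac\alpha2|\cdot|^2}$ via a dyadic covering estimate yields exactly the piecewise bound. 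Summing over $x,y\in\Gamma$ finishes the proof; taking $f\equiv1$ (so that $J^{a,b}f$ vanishes unless $a=b=0$ and $J^{0,0}f\equiv1$, and the $\xi$-integral of the Gaussian is a constant) recovers the single-operator estimate $\|T_g\|_{\mathcal S_p}\lesssim\sum_{|a|+|b|\le 2n+1}\big(\int|J^{a,b}g|^p dv\big)^{1/p}$ of \cite{Coburn2010}.

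The technical heart is the Gaussian bookkeeping of the last paragraph: one juggles the variables $\zeta,z,w$ and the two convolution variables hidden in the $\Xi$'s, together with several free translation parameters — the thickening variable $w$ of Theorem \ref{Theorem 4.4}, the shift $d_{x,y}$ forced by the non-commutativity of the Weyl operators, and the midpoints $\tfrac{p+q}{2}$ — and one must (i) track how the $\tfrac\alpha2$-Gaussians combine so that a single Gaussian in $w+d_{x,y}$ survives and the decay $(1+|\Re x|+|\Im x|)^{-(2n+1)}(1+|\Re y|+|\Im y|)^{-(2n+1)}$ is preserved, (ii) apply Jensen's inequality at precisely the right stage so the $p$-th powers attach to $J^{a,b}f$ and $J^{a',b'}g$ individually rather than to their product, and (iii) absorb $d_{x,y}$ and every recentring into the final $\sup_w$. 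The closing covering step is also mildly delicate, since the Schwartz factors only produce a rapidly decreasing kernel $\Omega$ whereas the statement asks for the clean Gaussian $e^{-\frac\alpha2|\xi-\eta+w|^2}$.
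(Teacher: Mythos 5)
Your proposal is correct and follows essentially the same route as the paper's proof: decompose both factors via Theorem \ref{Theorem 1.2}, use Lemma \ref{Lemma 2.3} together with the integration-by-parts identity (\ref{equation 3.1}) to gain the decay $(1+|\Re x|+|\Im x|)^{-(2n+1)}(1+|\Re y|+|\Im y|)^{-(2n+1)}$, and feed the resulting two-variable Berezin (Gaussian) estimates into Theorem \ref{Theorem 4.4} with the final $\sup_w$. The deviations are only implementational --- you keep a middle Weyl operator $W_d$ so that the Schwartz factor stays fixed, where the paper uses the one-sided identities of Lemma \ref{Lemma 2.3} to remove all Weyl operators at the cost of translated Schwartz factors, and you use Jensen plus a lattice-Gaussian domination of the rapidly decreasing kernel where the paper splits the Gaussians by H\"older --- and both versions need $p\geq 1$ (Theorem \ref{Theorem 4.4} and the Jensen/H\"older steps), so the parenthetical claim about $0<p<1$ should be dropped.
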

\begin{proof}
By Theorem \ref{Theorem 1.2}, we know that $T_f$ and $T_g$ are bounded and we have two decompositions
$$T_f=\sum_{y\in\Gamma}T_{f_y}\text{\quad and \quad}T_g=\sum_{x\in\Gamma}T_{g_x},$$
where $f_y=(\mathcal{H}_{\frac{1}{2\alpha}}f)\ast\mathcal{F}(\varphi_{y} a_{\frac{1}{2\alpha}}^{-1})$ and $g_x=(\mathcal{H}_{\frac{1}{2\alpha}}g)\ast\mathcal{F}(\varphi_{x} a_{\frac{1}{2\alpha}}^{-1})$.
Thus we have
\begin{align*}
\|T_fT_g\|_{\mathcal{S}_p}\leq & \|\sum_{y\in\Gamma}T_{f_y}\sum_{x\in\Gamma}T_{g_x}\|_{\mathcal{S}_p}\leq \sum_{y\in\Gamma}\sum_{x\in\Gamma}\|T_{f_y}T_{g_x}\|_{\mathcal{S}_p}\\
= & \sum_{y\in\Gamma}\frac{(1+|\Re y|+|\Im y|)^{2n+1}}{(1+|\Re y|+|\Im y|)^{2n+1}}\sum_{x\in\Gamma}\frac{(1+|\Re x|+|\Im x|)^{2n+1}}{(1+|\Re x|+|\Im x|)^{2n+1}} \|T_{f_y}T_{g_x}\|_{\mathcal{S}_p}\\
\lesssim & \sup_{x}\sup_{y} \sum_{\mbox{\tiny$\begin{array}{c}
  |a|+|b|\leq 2n+1\\
  |a'|+|b'|\leq 2n+1\end{array}$} } |(\Re y)^{a}||(\Im y)^{b}||(\Re x)^{a'}||(\Im x)^{b'}| \|T_{f_y}T_{g_x}\|_{\mathcal{S}_p}
\end{align*}
By Lemma \ref{Lemma 2.3} we have 
$$W_{\frac{-i\pi y}{\alpha}}T_{f_y}= T_{(b_{y}\mathcal{H}_{\frac{1}{2\alpha}}f)\ast \tau_{\frac{i\pi y}{2\alpha} }\mathcal{F}[a_{\frac{1}{2\alpha}}^{-1}\varphi_{y_0}]}\text{\quad and\quad}T_{g_x}W_{-\frac{i\pi x}{\alpha}}=  T_{(b_{x}\mathcal{H}_{\frac{1}{2\alpha}}g)\ast \tau_{-\frac{i\pi x}{2\alpha} }\mathcal{F}[a_{\frac{1}{2\alpha}}^{-1}\varphi_{x_0}]},$$
 where $y_0=0$. 
Then we get
$$ \|T_{f_y}T_{g_x}\|_{\mathcal{S}_p}=\|W_{\frac{-i\pi y}{\alpha}}T_{f_y}T_{g_x}W_{-\frac{i\pi x}{\alpha}}\|_{\mathcal{S}_p}
=\|T_{(b_{y}\mathcal{H}_{\frac{1}{2\alpha}}f)\ast \tau_{\frac{i\pi y}{2\alpha} }\mathcal{F}[a_{\frac{1}{2\alpha}}^{-1}\varphi_{y_0}]} T_{(b_{x}\mathcal{H}_{\frac{1}{2\alpha}}g)\ast \tau_{-\frac{i\pi x}{2\alpha} }\mathcal{F}[a_{\frac{1}{2\alpha}}^{-1}\varphi_{x_0}]}\|_{\mathcal{S}_p}.$$
By (\ref{equation 3.1}), we have
$$
{(\Re y)^{a}(\Im y)^{b}(b_{y}\mathcal{H}_{\frac{1}{2\alpha}}f)\ast \tau_{\frac{i\pi y}{2\alpha} }\mathcal{F}[a_{\frac{1}{2\alpha}}^{-1}\varphi_{y_0}]}=\sum_{c<a, d<b} (b_{y}J^{c,d}f)\ast h_{y,c,d}
$$
and
$$
{(\Re x)^{a'}(\Im x)^{b'}(b_{x}\mathcal{H}_{\frac{1}{2\alpha}}g)\ast \tau_{\frac{-i\pi x}{2\alpha} }\mathcal{F}[a_{\frac{1}{2\alpha}}^{-1}\varphi_{x_0}]}=\sum_{c<a', d<b'} (b_{x}J^{c,d}f)\ast h_{x,c,d}
$$
where $h_{y,c,d}$ and $h_{x,c,d}$ are Schwartz functions and
$$\int_{\mathbb{C}_{n}}|h_{x,c,d}(z)|dv(z)\text{ and }\int_{\mathbb{C}_{n}}|h_{y,c,d}(z)|dv(z)$$
are independent of $x$ and $y$.
Thus
\begin{align*}
\sum_{\mbox{\tiny$\begin{array}{c}
|a|+|b|\leq 2n+1\\
|a'|+|b'|\leq 2n+1\end{array}$} }& |(\Re y)^{a}||(\Im y)^{b}||(\Re x)^{a'}||(\Im x)^{b'}| \|T_{f_y}T_{g_x}\|_{\mathcal{S}_p}\\
\lesssim \sum_{\mbox{\tiny$\begin{array}{c}
|a|+|b|\leq 2n+1\\
|a'|+|b'|\leq 2n+1\end{array}$} }&\|T_{(b_{y}J^{a,b}f)\ast h_{y,a,b}} T_{(b_{x}J^{a',b'}g)\ast h_{x,a',b'}}\|_{\mathcal{S}_p}.
\end{align*}
Thus, we have
$$\|T_fT_g\|_{\mathcal{S}_p}\lesssim \sum_{\mbox{\tiny$\begin{array}{c}
|a|+|b|\leq 2n+1\\
|a'|+|b'|\leq 2n+1\end{array}$} } \|T_{(b_{y}J^{a,b}f)\ast h_{y,a,b}} T_{(b_{x}J^{a',b'}g)\ast h_{x,a',b'}}\|_{\mathcal{S}_p}.
$$

By Lemma \ref{Lemma 2.1}, we have $T_{(b_{y}J^{a,b}f)\ast h_{y,a,b}} T_{(b_{x}J^{a',b'}g)\ast h_{x,a',b'}}$ is a product of Toeplitz operators with bounded symbols, we can apply Theorem \ref{Theorem 4.4}. We have
\begin{align*}
&|\langle T_{(b_{y}J^{a,b}f)\ast h_{y,a,b}} T_{(b_{x}J^{a',b'}g)\ast h_{x,a',b'}}k_z,k_{z+w}\rangle|\\
\leq &\Big|\int_{\mathbb{C}^n}J^{a,b}f \ast h_{y,a,b}(\xi) \int_{\mathbb{C}^n} J^{a',b'}g \ast h_{x,a',b'}(\eta)k_z(\eta) \overline{K_{\xi}(\eta)}d\lambda_{\alpha}(\eta)
\overline{k_{z+w}(\xi)}d\lambda_{\alpha}(\eta) \Big|\\
\leq &\int_{\mathbb{C}^n}\int_{\mathbb{C}^n} |J^{a,b}f|\ast |h_{y,a,b}|(\xi)|J^{a',b'}g|\ast |h_{x,a',b'}|(\eta)
e^{\frac{\alpha|z-\eta|^2}{2}}e^{\frac{\alpha|\xi-\eta|^2}{2}}e^{\frac{\alpha|\xi-z-w|^2}{2}}dv(\xi) dv(\eta).
\end{align*}
Denote
$$L^{a,b}_{y}f(\xi)=|J^{a,b}f|\ast |h_{y,a,b}|(\xi) \text{ and } L^{a',b'}_{x}g(\eta)=|J^{a',b'}g|\ast |h_{x,a',b'}|(\eta).$$
By Theorem \ref{Theorem 4.4}, we have
\begin{align*}
&\|T_{(b_{y}J^{a,b}f)\ast h_{y,a,b}} T_{(b_{x}J^{a',b'}g)\ast h_{x,a',b'}}\|_{\mathcal{S}_p}\\
\leq & \int_{\mathbb{C}^n}\Big(\int_{\mathbb{C}^n}\big|\int_{\mathbb{C}^n}\int_{\mathbb{C}^n}
L^{a,b}_{y}f(\xi)L^{a',b'}_{x}g(\eta)e^{\frac{\alpha|z-\eta|^2}{2}}e^{\frac{\alpha|\xi-\eta|^2}{2}}e^{\frac{\alpha|\xi-z-w|^2}{2}}dv(\xi) dv(\eta)\big|^pdv(z)\Big)^{\frac{1}{p}}dv(w).
\end{align*}
For simplicity, we will denote $dv(w)$ by $dw$. If $p=1$, we have
\begin{align*}
&\|T_{(b_{y}J^{a,b}f)\ast h_{y,a,b}} T_{(b_{x}J^{a',b'}g)\ast h_{x,a',b'}}\|_{\mathcal{S}_p}\\
\leq& \int_{\mathbb{C}^n}\int_{\mathbb{C}^n}\int_{\mathbb{C}^n}\int_{\mathbb{C}^n}L^{a,b}_{y}f(\xi)L^{a',b'}_{x}g(\eta)
e^{\frac{\alpha|z-\eta|^2}{2}}e^{\frac{\alpha|\xi-\eta|^2}{2}}e^{\frac{\alpha|\xi-z-w|^2}{2}}d\xi d\eta dzdw\\
\leq& \int_{\mathbb{C}^n}\int_{\mathbb{C}^n}\int_{\mathbb{C}^n}\int_{\mathbb{C}^n}L^{a,b}_{y}f(\xi)L^{a',b'}_{x}g(\eta)
e^{\frac{\alpha|z-\eta|^2}{2}}e^{\frac{\alpha|\xi-\eta|^2}{2}}e^{\frac{\alpha|\xi-z-w|^2}{2}}dw dz d\xi d\eta \\
\lesssim &\int_{\mathbb{C}^n}\int_{\mathbb{C}^n}L^{a,b}_{y}f(\xi)L^{a',b'}_{x}g(\eta)e^{\frac{\alpha|\xi-\eta|^2}{2}}d\xi d\eta\\
\lesssim &\int_{\mathbb{C}^n}\int_{\mathbb{C}^n} |J^{a,b}f|\ast |h_{y,a,b}|(\xi) |J^{a',b'}g|\ast |h_{y,a,b}|(\eta) e^{\frac{\alpha|\xi-\eta|^2}{2}}d\xi d\eta\\
\lesssim &\int_{\mathbb{C}^n}\int_{\mathbb{C}^n} \int_{\mathbb{C}^n}\int_{\mathbb{C}^n} |J^{a,b}f(\xi-s)| |J^{a',b'}g(\eta-r)| e^{\frac{\alpha|\xi-\eta|^2}{2}}d\xi d\eta |h_{y,a,b}(s)|ds |h_{x,a,b}(r)|dr \\
\lesssim & \sup_{s\in \mathbb{C}^n}\int_{\mathbb{C}^n} \int_{\mathbb{C}^n}|J^{a,b}f(\xi)|^p|J^{a',b'}g(\eta)|^pe^{-\frac{\alpha}{2}|\xi-\eta+s|^2}d\xi d\eta.
\end{align*}
We have completed the proof when $p=1$.
If $p>1$, let $q>1$ such that $1/p+1/q=1$. We have
\begin{align*}
&\|T_{(b_{y}J^{a,b}f)\ast h_{y,a,b}} T_{(b_{x}J^{a',b'}g)\ast h_{x,a',b'}}\|_{\mathcal{S}_p}\\
\leq & \int_{\mathbb{C}^n}\Big(\int_{\mathbb{C}^n}\big|\int_{\mathbb{C}^n}\int_{\mathbb{C}^n}
L^{a,b}_{y}f(\xi)L^{a',b'}_{x}g(\eta)e^{\frac{\alpha|z-\eta|^2}{2}}e^{\frac{\alpha|\xi-\eta|^2}{2}}e^{\frac{\alpha|\xi-z-w|^2}{2}}d\xi d\eta\big|^pdz\Big)^{\frac{1}{p}}dw\\
\leq & \int_{\mathbb{C}^n}\Big(\int_{\mathbb{C}^n}\big|\int_{\mathbb{C}^n}\int_{\mathbb{C}^n}
L^{a,b}_{y}f(\xi)L^{a',b'}_{x}g(\eta)e^{\frac{\alpha|z-\eta|^2}{2p}} e^{\frac{\alpha|\xi-\eta|^2}{2p}} e^{\frac{\alpha|z-\eta|^2}{2q}} e^{\frac{\alpha|\xi-\eta|^2}{2q}} e^{\frac{\alpha|\xi-z-w|^2}{2}}d\xi d\eta\big|^pdz\Big)^{\frac{1}{p}}dw
\end{align*}
Using H$\ddot{\mathrm{o}}$lder inequality, we get
\begin{align*}
&\big|\int_{\mathbb{C}^n}\int_{\mathbb{C}^n}
L^{a,b}_{y}f(\xi)L^{a',b'}_{x}g(\eta)e^{\frac{\alpha|z-\eta|^2}{2p}} e^{\frac{\alpha|\xi-\eta|^2}{2p}} e^{\frac{\alpha|z-\eta|^2}{2q}} e^{\frac{\alpha|\xi-\eta|^2}{2q}} e^{\frac{\alpha|\xi-z-w|^2}{2}}d\xi d\eta\big|^p\\
\leq & \int_{\mathbb{C}^n}\int_{\mathbb{C}^n}
|L^{a,b}_{y}f(\xi)L^{a',b'}_{x}g(\eta)|^p e^{\frac{\alpha|z-\eta|^2}{2}} e^{\frac{\alpha|\xi-\eta|^2}{2}}d\xi d\eta
\Big(\int_{\mathbb{C}^n}\int_{\mathbb{C}^n}e^{\frac{\alpha|z-\eta|^2}{2}} e^{\frac{\alpha|\xi-\eta|^2}{2}}
e^{\frac{q\alpha|\xi-z-w|^2}{2}}d\xi d\eta\Big)^{\frac{p}{q}}\\
\lesssim & \int_{\mathbb{C}^n}\int_{\mathbb{C}^n}
|L^{a,b}_{y}f(\xi)L^{a',b'}_{x}g(\eta)|^p e^{\frac{\alpha|z-\eta|^2}{2}} e^{\frac{\alpha|\xi-\eta|^2}{2}}d\xi d\eta
\Big(\int_{\mathbb{C}^n} e^{\frac{\alpha|\xi-z|^2}{4}} e^{\frac{q\alpha|\xi-z-w|^2}{2}} d\xi\Big)^{\frac{p}{q}}\\
\lesssim& \int_{\mathbb{C}^n}\int_{\mathbb{C}^n}
|L^{a,b}_{y}f(\xi)L^{a',b'}_{x}g(\eta)|^p e^{\frac{\alpha|z-\eta|^2}{2}} e^{\frac{\alpha|\xi-\eta|^2}{2}}d\xi d\eta
e^{-\frac{p\alpha}{2+4q}|w|^2}.
\end{align*}
Thus

\begin{align*}
&\|T_{(b_{y}J^{a,b}f)\ast h_{y,a,b}} T_{(b_{x}J^{a',b'}g)\ast h_{x,a',b'}}\|_{\mathcal{S}_p}\\
\lesssim& \int_{\mathbb{C}^n} \Big(\int_{\mathbb{C}^n}\int_{\mathbb{C}^n}\int_{\mathbb{C}^n}
|L^{a,b}_{y}f(\xi)L^{a',b'}_{x}g(\eta)|^p e^{\frac{\alpha|z-\eta|^2}{2}} e^{\frac{\alpha|\xi-\eta|^2}{2}}d\xi d\eta
e^{-\frac{p\alpha}{2+4q}|w|^2}dz\Big)^{\frac{1}{p}}dw\\
\lesssim& \Big(\int_{\mathbb{C}^n}\int_{\mathbb{C}^n}
|L^{a,b}_{y}f(\xi)L^{a',b'}_{x}g(\eta)|^p e^{\frac{\alpha|\xi-\eta|^2}{2}}d\xi d\eta \Big)^{\frac{1}{p}}
\end{align*}
\begin{align*}
\lesssim& \Big(\int_{\mathbb{C}^n}\int_{\mathbb{C}^n}
|J^{a,b}f|\ast |h_{y,a,b}|(\xi)|J^{a',b'}g|\ast |h_{x,a',b'}|(\eta)|^p e^{\frac{\alpha|\xi-\eta|^2}{2}}d\xi d\eta \Big)^{\frac{1}{p}}\\
\lesssim& \int_{\mathbb{C}^n}\int_{\mathbb{C}^n}\Big(\int_{\mathbb{C}^n}\int_{\mathbb{C}^n}
|J^{a,b}f(\xi-s)|^p |J^{a',b'}g(\eta-r)|^p e^{\frac{\alpha|\xi-\eta|^2}{2}}d\xi d\eta \Big)^{\frac{1}{p}} |h_{y,a,b}(s)| |h_{x,a',b'}(r)|dsdr\\
\lesssim& \sup_{w\in \mathbb{C}^n}\Big(\int_{\mathbb{C}^n} \int_{\mathbb{C}^n}|J^{a,b}f(\xi)|^p|J^{a',b'}g(\eta)|^pe^{-\frac{\alpha}{2}|\xi-\eta+w|^2}dv(\xi)dv(\eta)\Big)^{\frac{1}{p}}.
\end{align*}
We have completed the proof.
\end{proof}

\begin{corollary}Let $g$ be a measurable function on $\mathbb{C}^n$ such that $gk_w \in L^2(\mathbb{C}^n,d\lambda_{\alpha})$ for any $w\in \mathbb{C}^n$.
We have
$$\|T_g\|_{\mathcal{S}_p}\lesssim \sum_{|a'|+|b'|\leq 2n+1 }  \Big(\int_{\mathbb{C}^n}|J^{a',b'}g(\eta)|^pdv(\eta)\Big)^{\frac{1}{p}}.$$
\end{corollary}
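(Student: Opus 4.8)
The plan is to derive this corollary directly from Theorem \ref{product} by specializing $f$ to the constant function $1$. Since the Bergman projection $P$ fixes the Fock space, $T_1=I$, hence $T_1T_g=T_g$, and so Theorem \ref{product} will bound $\|T_g\|_{\mathcal{S}_p}$ as soon as its hypotheses are verified for the pair $(1,g)$.

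First I would dispose of the trivial case: if the right-hand side is infinite the inequality holds vacuously, so I may assume $J^{a',b'}g\in L^p(\mathbb{C}^n,dv)$ for all $a',b'\in\mathbb{Z}_+^n$ with $|a'|+|b'|\le 2n+1$. A one-line H\"older estimate on a ball, $\int_{B(x,r)}|J^{a',b'}g|\,dv\le \|J^{a',b'}g\|_{L^p(\mathbb{C}^n,dv)}\,v(B(0,r))^{1-1/p}$ uniformly in $x$, then shows that $|J^{a',b'}g(y)|\,dv(y)$ is a Carleson measure on the Fock space. For $f\equiv 1$ we trivially have $fk_w\in L^2(\mathbb{C}^n,d\lambda_\alpha)$ for all $w$, and since the heat transform of a constant is that same constant, $\mathcal{H}_{\frac{1}{2\alpha}}1\equiv 1$; thus $J^{0,0}1\equiv 1$ while $J^{a,b}1\equiv 0$ whenever $|a|+|b|\ge 1$, so each $|J^{a,b}1(y)|\,dv(y)$ is a Carleson measure (Lebesgue measure, which has bounded mass on balls of a fixed radius, or else zero). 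Hence Theorem \ref{product} applies to $(1,g)$.

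It then remains only to simplify the resulting bound. Because $J^{a,b}1$ vanishes unless $a=b=0$, the double sum in Theorem \ref{product} collapses to the terms with $a=b=0$ and $|a'|+|b'|\le 2n+1$, and the inner $\xi$-integral is elementary:
$$\int_{\mathbb{C}^n}|J^{0,0}1(\xi)|^p\,e^{-\frac{\alpha}{2}|\xi-\eta+w|^2}\,dv(\xi)=\int_{\mathbb{C}^n}e^{-\frac{\alpha}{2}|\xi|^2}\,dv(\xi)=\Big(\frac{2\pi}{\alpha}\Big)^n,$$
independently of $\eta$ and $w$. Consequently each summand produced by Theorem \ref{product} equals $\big(2\pi/\alpha\big)^{n/p}\big(\int_{\mathbb{C}^n}|J^{a',b'}g(\eta)|^p\,dv(\eta)\big)^{1/p}$, and summing over $|a'|+|b'|\le 2n+1$ and absorbing the dimensional constant into $\lesssim$ yields exactly the claimed estimate. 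I do not expect a genuine obstacle here: the only points requiring a moment's care are that the Carleson-measure hypotheses of Theorem \ref{product} come for free from the finiteness of the right-hand side (the H\"older step) and that $T_1=I$; everything else reduces to the single Gaussian integral displayed above.
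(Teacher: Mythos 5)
Your proposal is correct and follows essentially the same route as the paper: take $f\equiv 1$ so that $T_fT_g=T_g$, apply Theorem \ref{product}, and observe that the $\xi$-integral of the Gaussian is a dimensional constant, collapsing the bound to the claimed $L^p$ sum. Your additional verification that the Carleson hypotheses follow from finiteness of the right-hand side (via H\"older) and that $J^{a,b}1$ vanishes for $|a|+|b|\ge 1$ is a welcome bit of care that the paper leaves implicit.
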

\begin{proof}
If $f=1$, then $T_fT_g=T_g$. By Theorem \ref{product}, we have
\begin{align*}
\|T_g\|_{\mathcal{S}_p}=&\|T_fT_g\|_{\mathcal{S}_p}\lesssim \sum_{|a'|+|b'|\leq 2n+1 }
\sup_{w\in \mathbb{C}^n}\Big(\int_{\mathbb{C}^n} \int_{\mathbb{C}^n}|J^{a',b'}g(\eta)|^pe^{-\frac{\alpha}{2}|\xi-\eta+w|^2}dv(\xi)dv(\eta)\Big)^{\frac{1}{p}}\\
\leq & \sum_{|a'|+|b'|\leq 2n+1 }  \Big(\int_{\mathbb{C}^n}|J^{a',b'}g(\eta)|^pdv(\eta)\Big)^{\frac{1}{p}}.
\end{align*}
\end{proof}

\section*{Acknowledgement}
It is our pleasure to thank Robert Fulsche for useful comments.


\bibliographystyle{plain} 

\end{document}